\newcommand{\cal}{\mathcal}
\renewcommand{\O}{{\mathcal O}}
\newtheorem{lemmaa}{}[section]
\newenvironment{lemma}{\begin{lemmaa}{\bf Lemma.}}{\end{lemmaa}}
\newenvironment{lemmaref}[1]
{\begin{lemmaa}{\bf Lemma\ }{\normalfont #1}{\bf.}}{\end{lemmaa}}
\newenvironment{example}{\begin{lemmaa}{\bf Example.}\rm}{\end{lemmaa}}
\newenvironment{theorem}{\begin{lemmaa}{\bf Theorem.}}{\end{lemmaa}}
\newenvironment{theoremref}[1]
{\begin{lemmaa}{\bf Theorem\ }{\normalfont #1}{\bf.}}{\end{lemmaa}}
\newenvironment{proposition}{\begin{lemmaa}{\bf Proposition.}}{\end{lemmaa}}
\newenvironment{propositionref}[1]
{\begin{lemmaa}{\bf Proposition\ }{\normalfont #1}{\bf.}}{\end{lemmaa}}
\newenvironment{corollary}{\begin{lemmaa}{\bf Corollary.}}{\end{lemmaa}}
\newenvironment{remark}{\begin{lemmaa}{\bf Remark.}\rm}{\end{lemmaa}}
\newenvironment{definition}{\begin{lemmaa}{\bf Definition.}}{\end{lemmaa}}
\newenvironment{remark*}{\noindent{\it Remark.}}{}
\newenvironment{example*}{{\bf Example.}}{}
\renewenvironment{paragraph}[1]
{\setlength{\parskip}{\memoire}
\refstepcounter{lemmaa}
{\medskip\noindent\bf\thelemmaa. #1}}
{\setlength{\parskip}{\medskipamount}
}
\renewcommand{\theequation}{\thelemmaa.\arabic{equation}}
\newcommand{\Q}{\ensuremath{\mathbb{Q}}}
\newcommand{\C}{\ensuremath{\mathbb{C}}}
\newcommand{\N}{\ensuremath{\mathbb{N}}}
\newcommand{\PP}{\ensuremath{\mathbb{P}}}
\newcommand{\NE}[1]{ \ensuremath{ \overline { \mbox{NE} }(#1)} }
\newcommand{\NS}[1]{ \ensuremath{ \mbox{NS}(#1) } }
\renewcommand{\epsilon}{\varepsilon}
\newcommand{\holom}[3]{\ensuremath{#1:#2  \rightarrow #3}}
\newcommand{\fibre}[2]{\ensuremath{#1^{-1} (#2)}}
\newcommand\sO{{\mathcal O}}
\newcommand\Hom{{\rm Hom}}
\DeclareMathOperator*{\Sym}{Sym}
\DeclareMathOperator*{\sing}{sing}
\DeclareMathOperator{\Chow}{Chow}
\DeclareMathOperator*{\Rat}{RatCurves^n}
\newcommand{\Univ}{\ensuremath{\mathcal{U}}}
\DeclareMathOperator*{\loc}{loc}
\newcommand{\minimal}{\mathcal{K}}
\newcommand{\minimalz}{\mathcal{K} ^\circ}
\newcommand{\minimalzt}{\tilde {\mathcal{K}} ^\circ}
\newcommand{\vmrt}{\mathcal{V}}
\newcommand{\vmrtx}{\mathcal{V}_x}
\newcommand{\ev}{\mathrm{ev}}
\newcommand{\binimal}{\ensuremath{\mathcal{W}}}
\newcommand{\HomWz}{\ensuremath {\Hom _\binimal ^\circ}}
\newcommand{\HomWt}{\ensuremath {\Hom _\binimal ^\sim}}
\newlength{\memoire}
\title{Numerical characterisation of quadrics} 
\date{July 28, 2015}
\author{Thomas Dedieu}
\author{Andreas H\"oring}
\address{Thomas Dedieu, Institut de Math\'ematiques de Toulouse (CNRS UMR 5219),
Universit\'e Paul Sabatier,
31062 Toulouse Cedex 9, France
}
\email{thomas.dedieu@m4x.org}
\address{Andreas H\"oring, Laboratoire de Math\'ematiques J.A. Dieudonn\'e,
UMR 7351 CNRS, Universit\'e de Nice Sophia-Antipolis, 06108 Nice Cedex 02, France        
}
\email{hoering@unice.fr}
\begin{document}

\renewcommand{\O}{\mathcal {O}}

\begin{abstract} 
Let $X$ be a Fano manifold
such that $-K_X \cdot C \geq \dim X$ for every rational curve $C \subset X$.
We prove that $X$ is a projective space or a quadric.
\end{abstract}

\maketitle

{\renewcommand{\thelemmaa}{\Alph{lemmaa}}
\section*{Introduction} 


Let $X$ be a Fano manifold, i.e.\ a  projective manifold with ample
anticanonical bundle. 
If the Picard number of $X$ is at least two, Mori theory shows the
existence of at least two 
non-trivial morphisms $\holom{\varphi_i}{X}{Y_i}$ which contain some
interesting information on the geometry of $X$. 
On the contrary, when the Picard number equals one Mori theory does
not yield any information, and one is 
thus led to studying $X$ in terms of the positivity of the
anticanonical bundle. A well-known example of such  
a characterisation is the following theorem of Kobayashi--Ochiai.

\begin{theoremref}{\cite{KO73}}
\label{t:KO} 
Let $X$ be a projective manifold of dimension $n$. Suppose that $-K_X \simeq d H$ with $H$ an 
ample line bundle on $X$. 
\begin{enumerate}
\item If $d=n+1$, then $X \simeq \PP^n$. 
\item If $d=n$, then $X \simeq \Q^n$.
\end{enumerate}
\end{theoremref}

(Throughout the paper, $\Q^n$ designates a smooth quadric hypersurface
in $\PP^{n+1}$ for any positive integer $n$.)

The divisibility of $-K_X$ in the Picard group is a rather restrictive condition, so it is natural to ask for similar 
characterisations under (a priori) weaker assumptions. Based on
Kebekus' study of singular rational curves \cite{Keb02a}, 
Cho, Miyaoka and Shepherd-Barron proved a generalisation of the first
part of Theorem~\ref{t:KO}:

\begin{theoremref}{\cite{CMS02, Keb02b}}
Let $X$ be a Fano manifold of dimension $n$. Suppose that 
$$
-K_X \cdot C \geq n+1 \quad 
\text{for all rational curves}\ C \subset X.
$$
Then $X \simeq \PP^n$.
\end{theoremref}

The aim of this paper is to prove a similar generalisation for the
second part of Theorem~\ref{t:KO}:

\begin{theorem} \label{t:miyaoka}
Let $X$ be a Fano manifold of dimension $n$. Suppose that
$$
-K_X \cdot C \geq n \quad 
\text{for all rational curves}\ C \subset X.
$$
Then $X \simeq \PP^n$ or $X \simeq \Q^n$.
\end{theorem}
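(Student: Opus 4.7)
The plan is to combine Mori's bend-and-break, the Cho--Miyaoka--Shepherd-Barron characterisation of projective space, and a study of the variety of minimal rational tangents at a general point to decide which of $\PP^n$ or $\Q^n$ we are dealing with.

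First I would dichotomise according to whether every rational curve on $X$ already satisfies $-K_X\cdot C \geq n+1$. If so, the Cho--Miyaoka--Shepherd-Barron theorem recalled above immediately yields $X\simeq \PP^n$. Otherwise some rational curve achieves the minimum value $-K_X\cdot C = n$, and the goal is to prove $X\simeq\Q^n$. Combining Mori's bend-and-break with the hypothesis, one produces out of such a minimal curve a dominating unsplit family $\Comp$ of rational curves with $-K_X\cdot C = n$ for the general member; unsplitness is automatic, since a reducible degeneration would contain a component $C'$ with $0 < -K_X\cdot C' < n$, contradicting the hypothesis.

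Having fixed $\Comp$, I would study the subfamily $\Comp_x$ of curves of $\Comp$ through a general point $x\in X$. Standard deformation theory shows that $\Comp_x$ is smooth of dimension $n-2$, and Kebekus's analysis of singular rational curves implies that the tangent map
\[
\tau_x : \Comp_x \dashrightarrow \PP(T_x X)\simeq\PP^{n-1}
\]
is a finite morphism onto a hypersurface $\mathcal{C}_x$. The heart of the proof is to establish that $\mathcal{C}_x$ is a \emph{smooth quadric}. Here the equality $-K_X\cdot C = n$ must be exploited to its full strength: studying the splitting type of the normal bundle of a general minimal curve, and the deformations of such a curve fixing a marked tangent direction at $x$, one extracts the constraints $\deg\mathcal{C}_x = 2$ and $\mathcal{C}_x$ smooth. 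In parallel I would prove that $X$ has Picard number one by ruling out extremal contractions: a divisorial one would yield a rational curve of too small anticanonical degree in the exceptional locus, while a fibre-type one would make the general fibre a projective space or a quadric of smaller dimension (by induction on $n$), producing either a section of the contraction or a minimal rational curve in the base which violates the hypothesis.

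With $X$ of Picard number one, the conclusion comes quickly: write $-K_X = rH$ for $H$ the ample generator of $\pic(X)$; the relation $n = r(H\cdot C)$ combined with the irreducibility of the smooth quadric $\mathcal{C}_x$ (which shows that a minimal rational curve $C$ belongs to a single family of $H$-degree $1$) forces $r = n$, and Kobayashi--Ochiai's Theorem~\ref{t:KO} yields $X\simeq\Q^n$. Alternatively one could invoke the Hwang--Mok Cartan--Fubini extension theorem, since the generic VMRT of $\Q^n$ is precisely a smooth quadric $\Q^{n-2}\subset\PP^{n-1}$. \emph{The main obstacle} is the identification of $\mathcal{C}_x$ as a smooth quadric: the hypothesis $-K_X\cdot C \geq n$ does not a priori pin down the degree of this hypersurface, and extracting $\deg \mathcal{C}_x = 2$ and smoothness from the sharp case $-K_X\cdot C = n$ requires a delicate study of how the tangent map degenerates, linking failure of smoothness of $\Comp_x$ to jumps in the splitting type of the normal bundle of minimal curves.
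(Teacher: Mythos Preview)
Your overall architecture matches the paper's: reduce to $X\not\simeq\PP^n$ via \cite{CMS02}, take a minimal family $\minimal$ with $-K_X\cdot l=n$, observe that the VMRT $\vmrtx\subset\PP(\Omega_{X,x})$ is a hypersurface of dimension $n-2$, and then try to show it is a smooth quadric so as to conclude by Mok's theorem. You also correctly identify the main obstacle.

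The genuine gap is precisely there. Your proposed mechanism --- ``studying the splitting type of the normal bundle'' and ``deformations fixing a marked tangent direction'' --- does not pin down $\deg\vmrtx$. The splitting type of a general standard curve is always $\sO(2)\oplus\sO(1)^{n-2}\oplus\sO$ regardless of what $\deg\vmrtx$ turns out to be, and deformations through a fixed tangent direction only see the local geometry of $\vmrtx$, not its global degree. What the paper actually does is quite different and is the real content of the argument: one constructs an auxiliary family $\binimal^\circ$ on $X$ by smoothing chains $l_1\cup l_2$ of two minimal curves through two general points, so that a general $[C]\in\binimal^\circ$ satisfies $T_X|_C\simeq\sO(2)^{\oplus n}$; one lifts $C$ to $\tilde C\subset\PP(\Omega_X)$ via the quotient $\Omega_X|_C\to\Omega_C$; and one proves the key Proposition~\ref{p:zero}, namely $\vmrt\cdot\tilde C=0$. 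This vanishing is obtained by a careful analysis of the conic bundle structure on the two-pointed universal family $\Univ_{x_1,x_2}\to\binimal_{x_1,x_2}$ (Lemmas~\ref{l:computation}, \ref{l:basechange}) together with Miyaoka's separation argument (Proposition~\ref{p:separation}). From $\vmrt\cdot\tilde C=0$ one reads off the cohomology class $\vmrt\sim_\Q d(\zeta-\tfrac{1}{n}\pi^*K_X)$, and then a deformation count for the lifted minimal curves $\tilde l$ inside $\vmrt\subset\PP(\Omega_X)$ gives $-K_\vmrt\cdot\tilde l\leq 2$, hence $d\leq 2$. None of this is visible in your sketch.

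Two smaller points. First, $\rho(X)=1$ follows directly from the Ionescu--Wi\'sniewski inequality (any extremal contraction has fibre dimension $\geq n-1$, and two such cannot coexist for $n\geq 3$); your induction/contraction analysis is unnecessary. Second, your Kobayashi--Ochiai endgame has a gap: irreducibility of $\vmrtx$ does \emph{not} by itself give $H\cdot l=1$ (there could be several families of minimal curves, and irreducibility of the VMRT for one of them says nothing about divisibility of $-K_X$). The paper instead uses that an irreducible quadric hypersurface is normal, hence $\vmrtx$ equals its smooth normalisation $\minimal_x$, and then invokes \cite{Mok08} directly; the case $d=1$ (or $d=2$ reducible) is excluded by \cite{Hwa07}.
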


This statement already appears in a paper of Miyaoka
\cite[Thm.0.1]{Miy04}, but the proof there has a gap 
(cf.\ Remark~\ref{remarkgap}).
In this paper we borrow some ideas and tools from Miyaoka's,
yet give a proof based on a completely different strategy.
Note also that Hwang gave a proof under the additional assumption that
the general VMRT (see below) is smooth \cite[Thm.1.11]{Hwa13},
a property that does not hold for every Fano manifold
\cite[Thm.1.10]{CD15}.

In the proof of Theorem~\ref{t:miyaoka}, we have to assume $n \geq 4$;
for $n \leq 3$ the statement follows easily from classification
results.\\
\indent
The assumption that $X$ is Fano assures that
$\rho(X)=1$ because of the Ionescu--Wi\'sniewski inequality
\cite[Thm.0.4]{Ion86}, \cite[Thm.1.1]{Wis91}
(see \S\ref{s:setup}).
It is possible to remove this assumption: the Ionescu--Wi\'sniewski
inequality together with \cite[Thm.1.3]{a18} enable one to deal with
the case $\rho(X)>1$, and one gets the following.

\begin{corollary}
\label{corollary}
Let $X$ be a projective manifold of dimension $n$ containing a
rational curve. If
$$
-K_X \cdot C \geq n \quad 
\text{for all rational curves}\ C \subset X,
$$
then $X$ is a projective space, a hyperquadric,
or a projective bundle over a curve. 
\end{corollary}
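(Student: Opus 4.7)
The plan is to reduce to the Picard number one situation handled by Theorem~\ref{t:miyaoka}, by running Mori theory on $X$ and using the length bound $\ell(R) \geq n$ to severely restrict the possible extremal contractions.

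By hypothesis $X$ contains a rational curve $C$ with $-K_X \cdot C \geq n > 0$, so the Cone Theorem yields a $K_X$-negative extremal ray $R \subset \NE{X}$ of length $\ell(R) \geq n$, with associated contraction $\holom{\varphi}{X}{Y}$; let $E \subset X$ denote its exceptional locus and $F$ a general fibre. The Ionescu--Wi\'sniewski inequality (\cite[Thm.0.4]{Ion86}, \cite[Thm.1.1]{Wis91}) gives
$$
\dim E + \dim F \;\geq\; n + \ell(R) - 1 \;\geq\; 2n-1,
$$
which is incompatible with $\varphi$ being birational (for then $\dim E, \dim F \leq n-1$). Hence $\varphi$ is of fibre type, $E = X$, and $\dim F \geq n-1$, leaving two possibilities: either $\dim Y = 0$, or $\dim Y = 1$ with $\dim F = n-1$.

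If $\dim Y = 0$, then $\NE{X}$ reduces to the single ray $R$, so $\rho(X) = 1$ and $-K_X$ is ample (since $-K_X \cdot C > 0$ for one, hence every, curve class); Theorem~\ref{t:miyaoka} then yields $X \simeq \PP^n$ or $X \simeq \Q^n$. If $\dim Y = 1$, a general fibre $F$ is a smooth projective manifold of dimension $n-1$ whose rational curves $C \subset F$ satisfy, by adjunction, $-K_F \cdot C = -K_X \cdot C \geq n = \dim F + 1$. The Cho--Miyaoka--Shepherd-Barron theorem applied to $F$ then gives $F \simeq \PP^{n-1}$. Finally, one invokes \cite[Thm.1.3]{a18} to upgrade $\varphi$ to a Zariski-locally trivial $\PP^{n-1}$-bundle over the (necessarily smooth) curve $Y$.

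The main difficulty lies in this last step of the $\dim Y = 1$ case: knowing that the \emph{general} fibre is isomorphic to $\PP^{n-1}$ does not by itself preclude degenerate fibres, so a projective bundle structure cannot be read off directly from the Cho--Miyaoka--Shepherd-Barron theorem. This is exactly the content of \cite[Thm.1.3]{a18}, which is tailored to high-length extremal contractions of the form produced here; it is the only substantive input beyond Theorem~\ref{t:miyaoka} and elementary Mori theory.
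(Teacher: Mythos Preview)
Your argument is correct and follows the route sketched in the paper: use the Ionescu--Wi\'sniewski inequality to force any $K_X$-negative extremal contraction to be of fibre type over a base of dimension at most $1$, then invoke Theorem~\ref{t:miyaoka} when the base is a point and \cite[Thm.1.3]{a18} when it is a curve. Your case distinction by $\dim Y$ is equivalent to the paper's distinction by $\rho(X)$, since $\dim Y = 0$ for an extremal contraction exactly when $\rho(X)=1$; the intermediate appeal to \cite{CMS02} for the general fibre is a reasonable elaboration, though \cite[Thm.1.3]{a18} already delivers the projective bundle structure directly from the length bound $\ell(R) \geq n = \dim F + 1$.
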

\vskip -\medskipamount
\noindent
(Note that under the assumptions of Corollary~\ref{corollary},
if $\rho(X)=1$ then $X$ is Fano.)

\subsection*{Outline of the proof}

In the situation of Theorem~\ref{t:miyaoka} let $\mathcal K$ be a family of minimal rational curves on $X$.
By Mori's bend-and-break lemma a minimal curve $[l] \in \mathcal K$ satisfies $-K_X \cdot l \leq n+1$
and if equality holds then $X \simeq \PP^n$ by \cite{CMS02}. By our assumption we are thus left to deal
with the case $-K_X \cdot l=n$. For a general point $x \in X$ the space $\mathcal K_x$ parametrising
curves in $\mathcal K$ passing through $x$ then has dimension $n-2$ and by \cite[Thm.3.4]{Keb02a}
there exists a morphism
$$
\tau_x : \mathcal K_x \rightarrow \PP(\Omega_{X, x})
$$
which maps a general curve $[l] \in  \mathcal K_x$ to its tangent
direction $T^\perp _{l, x}$.
By \cite[Thm.1]{HM04} 
this map is birational onto its image $\vmrtx$, the 
\emph{variety of minimal rational tangents} (VMRT) at $x$. 
We denote by $\vmrt \subset \PP(\Omega_X)$ the total VMRT, i.e.\ 
the closure of the locus covered by the VMRTs $\vmrtx$
for $x \in X$ general. For the proof of Theorem~\ref{t:miyaoka} we
will compute the cohomology class of the total VMRT 
$\vmrt \subset \PP(\Omega_X)$ in terms of the tautological class $\zeta$
and $\pi^* K_X$, where $\holom{\pi}{\PP(\Omega_X)}{X}$ is
the projection map. This computation is based on
the construction, on the manifold $X$, 
of a family $\binimal ^\circ$ of smooth rational curves such that for
every $[C] \in \binimal ^\circ$ one has 
$$
T_X|_C \simeq \O_{\PP^1}(2)^{\oplus n};
$$
it lifts to a family on $\PP(\Omega_X)$
by associating to a curve $C \subset X$ the image $\tilde C$
of the morphism 
$C \to \PP(\Omega_X)$ defined by the invertible quotient 
$$
\Omega_X|_C \rightarrow \Omega_C.
$$
The main technical statement of this paper is:
\begin{proposition} \label{p:zero}
Let $X \not\simeq \PP^n$ be a Fano manifold of dimension $n \geq 4$,
and suppose that 
$$
-K_X \cdot C \geq n \quad 
\text{for all rational curves}\ C \subset X.
$$
Then, in the above notation, one has $\vmrt \cdot \tilde C=0$
for all $[C] \in \mathcal{W} _0$.
\end{proposition}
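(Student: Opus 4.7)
The strategy is to prove the stronger statement that $\tilde C \cap \vmrt = \emptyset$. Since $\vmrt$ is a divisor in $\PP(\Omega_X)$ (its fibre $\vmrtx$ has dimension $n-2$ in each $\PP^{n-1}$) and $\tilde C$ is an irreducible curve, once one shows that $\tilde C$ is not contained in $\vmrt$ and that the two are disjoint, the intersection number $\vmrt \cdot \tilde C$ vanishes as claimed. By construction, a point of $\tilde C$ above $p \in C$ is $(p, [T_{C,p}])$, while a point of $\vmrt$ above $p$ is of the form $(p, [T_{l,p}])$ for some $[l] \in \minimal_p$; hence a point in $\tilde C \cap \vmrt$ corresponds precisely to the existence of a minimal rational curve $l$ through some $p \in C$ tangent to $C$ at $p$. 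Note that under our hypothesis $X \not\simeq \PP^n$ combined with the bound $-K_X \cdot C' \geq n$ for every rational $C'$, the theorem of Cho--Miyaoka--Shepherd-Barron and bend-and-break force $-K_X \cdot l = n$ exactly for every minimal $[l] \in \minimal$, so $l$ is a free rational curve with $\mathcal N_{l/X} \simeq \O(1)^{\oplus(n-2)} \oplus \O$.

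The core of the proposed argument is to obtain a contradiction from such a tangency by producing a rational curve on $X$ of anticanonical degree strictly less than $n$. The inputs at our disposal are: (i) the isomorphism $T_X|_C \simeq \O(2)^{\oplus n}$, which gives $\mathcal N_{C/X} \simeq \O(2)^{\oplus(n-1)}$ and therefore an $(n-1)$-dimensional family of deformations of $C$ passing through $p$ with tangent direction fixed, with $n-1 \geq 3$ because $n \geq 4$; (ii) the essentially unique determination of $l$ among minimal curves at $p$ by its tangent $T_{l,p}$, via the birationality of $\tau_p : \minimal_p \to \vmrtx$; and (iii) the freeness of $l$ recalled above. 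The natural tool is a bend-and-break argument: deform the tacnodal stable map $l \cup_p C \to X$ inside the large family of tangent-preserving deformations of $C$, and force a degeneration into a tree of rational curves. The combinatorial structure of the tree is then severely constrained by the hypothesis that each component must have anticanonical degree at least $n$; the total degree $-K_X \cdot (l + C) = 3n$ leaves only a handful of possibilities, each of which should be excluded by comparing tangent directions at $p$ with the birationality of $\tau_p$.

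The main obstacle will be to make the bend-and-break step precise in this tacnodal setting: the classical form applies to rational curves through two distinct fixed points, whereas here the constraint is one point with prescribed tangent, and the deformation must be carried out in the moduli of stable maps with tangential contact. A secondary issue is to rule out a priori the case $\tilde C \subset \vmrt$. This should follow from a dimension count: as $[C]$ varies in $\binimal _0$ and $p$ varies over $C$, the lifts $(p, [T_{C,p}])$ sweep out a subvariety of $\PP(\Omega_X)$ of dimension $2n-1$, so a general lift is not contained in the proper divisor $\vmrt$, and the vanishing of the intersection number then propagates to every member of $\binimal _0$ by the deformation invariance of intersection numbers along the connected family.
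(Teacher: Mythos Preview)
Your proposal correctly identifies the geometric meaning of a point in $\tilde C \cap \vmrt$ --- a minimal curve $l$ tangent to $C$ at some $p \in C$ --- and your reduction to general $[C]$ via deformation invariance is fine. But the core step, the ``tacnodal bend-and-break'', is a genuine gap that you yourself flag as the main obstacle without resolving it.

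There is in fact a structural obstruction. The tacnodal union $l \cup_p C$ has arithmetic genus $1$: the scheme-theoretic intersection $l \cap C$ has length $2$, so $p_a(l \cup C) = p_a(l)+p_a(C)+2-1 = 1$. Hence it does not smooth to a rational curve, and the genus-$0$ bend-and-break machinery does not apply to the union. The tangency is not recorded by a genus-$0$ stable map either: the nodal tree $\PP^1 \cup \PP^1 \to X$ onto $l \cup C$ forgets the second-order contact, and inserting a contracted bridge component is unstable. If instead you deform $C$ alone through $p$ with prescribed tangent and try to force $C$ to break, the outcome $C \rightsquigarrow l_1 + l_2$ with both $l_i$ minimal (each of anticanonical degree $n$) is entirely compatible with your degree constraint; birationality of $\tau_p$ at best identifies one component with $l$, leaving $l + l'$, which is no contradiction. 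Your promised case analysis (``each of which should be excluded'') is never carried out, and I do not see how to close it along these lines.

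The paper proceeds quite differently. After arranging via Lemma~\ref{l:badset} that the tangency is with a \emph{standard} $[l] \in \minimal_{x_1}$, one chooses $x_2 \in C \setminus \loc^1_{x_1}$ (Lemma~\ref{l:notcontained}) and works on the universal family $q:\Univ_{x_1,x_2} \to \binimal_{x_1,x_2}$, which after base change to a curve is a conic bundle with two disjoint sections $\sigma_1,\sigma_2$ contracted by $\ev$ (Lemma~\ref{l:basechange}). The tangency $T_{C,x_1}=T_{l,x_1}$ is reinterpreted as the existence of a curve $\Gamma \subset \Univ_{x_1,x_2}$ with $\ev(\Gamma)=l$, $q(\Gamma)$ a curve, and $\Gamma$ meeting $\sigma_1$ at the point $C \cap \sigma_1$. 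Proposition~\ref{p:separation} then gives the contradiction: comparing tangent spaces via the standard splitting of $T_X|_l$ forces the proper transform of a section lying over $\Gamma$ in the conic bundle to have self-intersection $\leq 0$, whereas Lemma~\ref{l:computation} says any such section must be disjoint from $\sigma_1 \cup \sigma_2$. The engine is thus the surface theory of conic bundles (Lemmas~\ref{lemmadescribeS} and~\ref{l:computation}), not bend-and-break.
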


Once we have shown this statement a similar intersection computation
involving a general minimal rational curve $l$ yields that the 
VMRT $\vmrt_x \subset \PP(\Omega_{X, x})$ is a hypersurface of degree
at most two. We then conclude with some earlier results 
of Araujo, Hwang, and Mok \cite{Ara06, Hwa07, Mok08}.

\subsection*{Acknowledgements.}
We thank St\'ephane Druel for his numerous comments during
this project. This work was partially supported by the A.N.R. project CLASS\footnote{ANR-10-JCJC-0111}.
}

\section{Notation and conventions}

{\setlength{\parskip}{\memoire}
We work over the field $\C$ of complex numbers. 
Topological notions refer to the Zariski topology.

We use the modern notation for projective spaces, as introduced by
Grothendieck: 
if $\cal E$ is a locally free sheaf on a scheme $X$, we let
$\PP(\mathcal{E})$ be $\mathbf{Proj}\,(\Sym\, \mathcal{E})$.
If $L$ is a line in a vector space $V$,
$L ^\perp$ designates the corresponding point in $\PP(V^\vee)$. 

A variety is an integral scheme of finite type over $\C$, 
a manifold is a smooth variety.
A fibration is a proper surjective 
morphism with connected fibres \holom{\varphi}{X}{Y} 
such that $X$ and $Y$ are normal and $\dim X>\dim Y>0$.

\medskip
We will use the standard terminology and results on rational curves, 
as explained in \cite[Ch.II]{Kol96}, \cite[Ch.2,3,4]{Deb01}, and
\cite{Hwa01}.
Let $X$ be a variety.
We remind the reader that following \cite[II, Def.2.11]{Kol96}, the
notation $\Rat X$ refers to the union of the normalisations of those
locally closed subsets of the Chow variety of $X$ parametrising
irreducible rational curves (the superscript $\vphantom{a}^{\rm n}$ is a
reminder that we normalised, and has nothing to do with the dimension).

For technical reasons, we have to consider families of rational curves
on $X$ as living alternately in $\Rat X$ and in $\Hom(\PP^1,X)$. 
Our general policy is to call $\Hom _{\mathcal {R}} \subset 
\Hom(\PP^1,X)$ the family corresponding to $\mathcal {R} \subset
\Rat X$.
}

\section{Preliminaries on conic bundles}

In this section, we establish some basic facts about conic bundles
over a curve and compute some intersection numbers which will turn out
to be crucial for the proof of Proposition~\ref{p:zero}. 
All these statements appear in one form or another in 
\cite[\S 2]{Miy04}, but we recall them and their proofs for the
clarity of exposition. 

\begin{definition} \label{definitionconicbundle}
A \emph{conic bundle} is an equidimensional projective fibration
$\holom{\varphi}{X}{Y}$ 
such that there exists a rank three vector bundle $V \rightarrow Y$
and an embedding $X \hookrightarrow \PP(V)$ that maps every
$\varphi$-fibre $\fibre{\varphi}{y}$ onto a conic 
(i.e.\ the zero scheme of a degree $2$ form)
in $\PP(V_y)$.
The set
$$
\Delta := 
\{ 
y \in Y \ | \ \fibre{\varphi}{y} \ \mbox{is not smooth}
\}
$$
is called the \emph{discriminant locus} of the conic bundle.  
\end{definition}

\begin{lemma} \label{lemmadualgraph}
Let $S$ be a smooth surface admitting a projective fibration 
$\holom{\varphi}{S}{T}$ onto a smooth curve such that the general fibre is $\PP^1$, and
such that $-K_S$ is $\varphi$-nef. Let $F$ be a reducible $\varphi$-fibre and suppose that 
$$
F = C_1+C_2+F',
$$
where the $C_i$ are $(-1)$-curves and $C_i \not\subset F'$. Then $F' = \sum E_j$ is a reduced chain of $(-2)$-curves
and the dual graph of $F$ is as depicted in Figure~\ref{fig:dualgraph}.
\begin{figure}[!h]
\caption{}
\label{fig:dualgraph}
\setlength{\unitlength}{1.2mm}
\centering
\begin{picture}(120,10)
\put(25,1){\circle*{2}}
\multiput(35,1)(10,0){4}{\circle{2}}
\put(75,1){\circle*{2}}
\multiput(26,1)(10,0){2}{\line(1,0){8}}
\multiput(46,1)(3,0){3}{\line(3,0){2}}
\multiput(56,1)(10,0){2}{\line(1,0){8}}
\put(24,3){$C_1$}
\put(34,3){$E_1$}
\put(44,3){$E_2$}
\put(54,3){$E_{k-1}$}
\put(64,3){$E_k$}
\put(74,3){$C_2$}
\end{picture}
\end{figure}
\end{lemma}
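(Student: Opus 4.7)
The plan is to combine basic intersection-theoretic computations with an inductive argument based on contracting $C_1$.

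First, I would establish the numerics and identify the components of $F'$ as $(-2)$-curves. Since the general $\varphi$-fibre is $\PP^1$, one has $F^2=0$ and $-K_S \cdot F = 2$, while $-K_S \cdot C_i = 1$ since the $C_i$ are $(-1)$-curves; hence $-K_S \cdot F' = 0$. As $-K_S$ is $\varphi$-nef and $F'$ is effective, every irreducible component $E$ of $F'$ must satisfy $-K_S \cdot E = 0$. Zariski's lemma applied to the intersection form on the components of $F$ gives $E^2 \leq 0$, with $E^2=0$ ruled out since it would force $E$ to be numerically proportional to $F$, contradicting the reducibility of $F$. Combining $K_S \cdot E = 0$ with the adjunction formula $2 p_a(E) - 2 = E^2 + K_S \cdot E$ then yields $E^2 = -2$ and $p_a(E)=0$, so each component of $F'$ is a smooth rational $(-2)$-curve.

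Second, I would analyse how $C_1$ and $C_2$ attach to $F'$. The relation $C_i \cdot F = 0$ combined with $C_i^2 = -1$ gives $C_1 \cdot F' = C_2 \cdot F' = 1 - C_1 \cdot C_2$. If $F' = 0$, this forces $C_1 \cdot C_2 = 1$, giving the degenerate chain of length two. Otherwise, connectedness of the fibre rules out $C_1 \cdot C_2 \geq 1$ (lest $F'$ be disconnected from $C_1+C_2$), so $C_1 \cdot C_2 = 0$ and $C_i \cdot F' = 1$. Writing $C_i \cdot F' = \sum_j n_j (C_i \cdot E_j)$ with $n_j$ the multiplicity of $E_j$ in $F$, a sum of non-negative integers equalling $1$ implies that $C_i$ meets exactly one component $E_{j_0^{(i)}}$ of $F'$, transversally in a single point, and that component occurs with multiplicity $1$ in $F$.

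Finally, I would establish the chain structure by induction on the number of irreducible components of $F$, the base case $F = C_1 + C_2$ being the degenerate chain. For the inductive step, contract $C_1$ via $\mu \colon S \to \bar S$: using $K_S = \mu^* K_{\bar S} + C_1$, one verifies that $-K_{\bar S}$ remains $\bar\varphi$-nef. In the pushed-forward fibre $\bar F = \mu_* F$, the image $\mu(C_2)$ stays a $(-1)$-curve (as $C_1 \cdot C_2 = 0$), the unique component $E_{j_0}$ of $F'$ meeting $C_1$ becomes a $(-1)$-curve with self-intersection jumping from $-2$ to $-2+1 = -1$, and all other components remain $(-2)$-curves. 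Applying the induction hypothesis with $\mu(E_{j_0})$ and $\mu(C_2)$ playing the role of the two distinguished $(-1)$-curves yields the chain structure on $\bar F$, which lifts to the claimed chain $C_1 - E_{j_0} - \cdots - C_2$ on $S$; reducedness of $F'$ falls out of the induction at the same time. The main obstacle is the careful verification that the contracted configuration on $\bar S$ genuinely fits the lemma's hypotheses, in particular the $\bar\varphi$-nefness of $-K_{\bar S}$ and the emergence of precisely two distinguished $(-1)$-curves framing the new $F''$; with the numerics of the earlier steps in hand this becomes a routine check.
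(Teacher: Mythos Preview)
Your proposal is correct and follows essentially the same strategy as the paper: first use $-K_S\cdot F=2$, $-K_S\cdot C_i=1$, and $\varphi$-nefness to force $-K_S\cdot E_j=0$ and hence $E_j^2=-2$, then run an induction by contracting one of the $(-1)$-curves so that the adjacent $(-2)$-curve becomes the new $(-1)$-curve. The only cosmetic differences are that the paper contracts $C_2$ rather than $C_1$, and rules out $C_1\cdot C_2>0$ by observing that $\mu_*(C_1)$ would then have self-intersection $0$ inside a reducible fibre, whereas you use connectedness of $F$; both arguments are equivalent.
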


\begin{proof} 
Write $F' = \sum_{j=1}^k a_j E_j$, $a_j \in \N$, where 
$E_1, \ldots, E_k$ are the irreducible components of $F'$.
Note first that since $-K_S \cdot F=2$ and $-K_S \cdot C_i=1$,
the fact that $-K_S$ is $\varphi$-nef implies 
$-K_S \cdot E_j =0$ for all $j$.
Since $E_j$ is an irreducible component of a reducible fibre,
we have $E_j^2<0$. Thus we see that each $E_j$ is a $(-2)$-curve.

We will now proceed by induction on the number of irreducible
components of $F'$, the case $F'=0$ being trivial.
Let $\holom{\mu}{S}{S'}$ be the blow-down of the
$(-1)$-curve $C_2$;
then by the rigidity lemma \cite[Lemma~1.15]{Deb01}, there is a
morphism $\holom{\varphi'}{S'}{T}$ such that $\varphi = \varphi' \circ
\mu$. 
Note that $S'$ is smooth and $-K_{S'}$ is $\varphi'$-nef
(see, e.g., \cite[Rem.5.7]{a22}).
We also have
$$
0 = C_2 \cdot F = -1 + C_2 \cdot (C_1+\sum_{i=1}^k a_i E_i), 
$$
so $C_2$ meets $C_1+\sum_{i=1}^k a_i E_i$ transversally in exactly one point. If $C_2 \cdot C_1>0$, then
$\mu_*(C_1)$ has self-intersection $0$, yet it is also an irreducible component of the reducible fibre 
$\mu_*(C_1+\sum_{i=1}^k a_i E_i)$, a contradiction. Thus (up to renumbering) we can suppose that
$C_2 \cdot E_1=1$ and $a_1=1$. In particular $\mu_*(E_1)$ is a $(-1)$-curve, so
$$
\mu_*(C_1+\sum_{i=1}^k a_i E_i) = \mu_*(C_1) + \mu_*(E_1) +  \mu_*(\sum_{i=2}^{k} a_i E_i) 
$$
satisfies the induction hypothesis.
\end{proof}

In the following we will use that for every normal surface 
one can define an intersection theory using the Mumford pull-back
to the minimal resolution, cf.\ \cite{Sak84}.

\begin{lemma} \label{lemmadescribeS}
Let $S$ be a normal surface admitting a projective fibration $\holom{\varphi}{S}{T}$ onto a smooth curve
such that the general fibre is $\PP^1$ and such that every fibre is reduced and has at most two irreducible
components. Then 
\begin{enumerate}
\item\label{l:descr-c:cbdle} $\varphi$ is a conic bundle;
\item \label{l:descr-c:Ak} $S$ has at most $A_k$-singularities; and
\item \label{l:descr-c:sing} if $s \in S_{\sing}$, then $s=F_{\varphi(s),1} \cap F_{\varphi(s),2}$ where $F_{\varphi(s)} = F_{\varphi(s),1} + F_{\varphi(s),2}$
is the decomposition of the fibre over $\varphi(s)$ in its irreducible components. In particular $F_{\varphi(s)}$ is a reducible conic.
\end{enumerate}
\end{lemma}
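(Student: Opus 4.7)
The plan is to first classify the fibre structure via arithmetic genus, then handle \ref{l:descr-c:sing} by local flatness, derive \ref{l:descr-c:Ak} from the versal deformation of the node, and finally deduce \ref{l:descr-c:cbdle} from Gorensteinness. First, since $S$ is normal of dimension $2$ (hence Cohen--Macaulay) and $\varphi$ is equidimensional onto a smooth curve, miracle flatness gives $\varphi$ flat, so every fibre has $p_a(F)=p_a(\PP^1)=0$. Combined with reducedness and the hypothesis of at most two components, the Mayer--Vietoris sequence $0 \to \O_F \to \O_{C_1} \oplus \O_{C_2} \to \O_{C_1 \cap C_2} \to 0$ in the reducible case forces $p_a(C_i)=0$ and $\ell(C_1 \cap C_2)=1$; hence every fibre is either a smooth $\PP^1$, or the union of two $\PP^1$'s meeting transversely at a single point.

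For \ref{l:descr-c:sing}, I would show any $s \in S_{\sing}$ must be at the node of a reducible fibre. Picking a local parameter $t$ of $T$ at $\varphi(s)$, flatness gives $\widehat\O_{S,s}/(t) \simeq \widehat\O_{F,s}$. If $F$ were smooth at $s$, this quotient would be a DVR, and lifting a uniformiser together with Nakayama would make $\widehat\O_{S,s}$ a $2$-dimensional regular local ring, contradicting $s \in S_{\sing}$. By the fibre classification above, $F$ must be reducible with $s$ at its node.

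For \ref{l:descr-c:Ak}, at a singular point $p = C_1 \cap C_2$ the formal family $\widehat\O_{S,p} \to k[[t]] = \widehat\O_{T, \varphi(p)}$ is a deformation of the node $\widehat\O_{F,p} \simeq k[[u,v]]/(uv)$ over $k[[t]]$. The node is an isolated complete intersection singularity with $1$-dimensional tangent space to deformations, so it admits a one-parameter smooth versal deformation $k[[u,v,s]]/(uv-s)$ over $k[[s]]$. Our deformation is therefore pulled back along some morphism $s \mapsto f(t) \in k[[t]]$, giving $\widehat\O_{S,p} \simeq k[[t,u,v]]/(uv - f(t))$. Normality excludes $f \equiv 0$ (else $\widehat\O_{S,p}$ is not a domain), and if $f$ has order $k+1 \geq 1$ at $t=0$, a unit rescaling of $t$ (available in characteristic zero) puts the equation in the form $uv = t^{k+1}$, the $A_k$-singularity ($k=0$ giving a smooth point).

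Finally for \ref{l:descr-c:cbdle}: since $A_k$-singularities are hypersurface (hence Gorenstein), $\omega_{S/T}$ is a line bundle, with restriction $\omega_F$ of degree $-2$ on every fibre. Thus $\omega_{S/T}^{-1}$ is $\varphi$-ample, and a direct computation gives $h^0(F,\omega_F^{-1})=3$ and $h^1(F,\omega_F^{-1})=0$ on all fibres (for a reducible conic, $\omega_F^{-1}|_{C_i} = \O_{C_i}(1)$, so $4$ sections glue to $3$ via a single node-compatibility). By cohomology and base change, $V := \varphi_* \omega_{S/T}^{-1}$ is a rank-$3$ vector bundle on $T$, and the relative linear system yields an embedding $S \hookrightarrow \PP(V)$ over $T$ mapping each fibre to a (smooth or reducible) conic. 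The main obstacle is the versal deformation argument in \ref{l:descr-c:Ak}: the key inputs are that the tangent space to deformations of the node is $1$-dimensional and that obstructions vanish, so the versal base is $\Spec k[[s]]$ and every deformation of the node over $\Spec k[[t]]$ is pulled back from this.
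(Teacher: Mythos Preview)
Your proof is correct, and the versal-deformation step you flag as the main obstacle is sound: the node is an isolated hypersurface singularity with one-dimensional first-order deformations and vanishing obstruction space, so its miniversal family is $uv=s$ over $k[[s]]$, and any flat deformation over $k[[t]]$ is pulled back along some $s\mapsto f(t)$. The approach, however, genuinely differs from the paper's for part~\ref{l:descr-c:Ak}. The paper passes to the minimal resolution $\epsilon\colon\hat S\to S$ and shows it is crepant by arguing that $K_{\hat S}\cdot\hat C_i\geq -1$ for the proper transforms $\hat C_i$ of the two fibre components (otherwise $\hat C_i$ would deform, which is impossible for a component of a reducible fibre); this forces $E=0$ in $K_{\hat S}\equiv\epsilon^*K_S-E$, so $S$ has canonical, hence Gorenstein, singularities. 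From Gorensteinness the paper deduces~\ref{l:descr-c:cbdle}, and only then applies the preceding dual-graph lemma to the crepant resolution to obtain the $A_k$-classification. You instead compute $\widehat{\O}_{S,p}\cong k[[t,u,v]]/(uv-t^{k+1})$ directly from deformation theory and afterwards derive Gorensteinness and~\ref{l:descr-c:cbdle}. Your route is more self-contained, bypassing the dual-graph lemma entirely; the paper's is more geometric and has the advantage that its explicit description of the minimal resolution is precisely what is reused in the next lemma. (A minor point: characteristic zero is not needed to normalise $uv=f(t)$ to $uv=t^{k+1}$; one can simply absorb the unit factor of $f$ into $u$.)
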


\begin{proof}
If a fibre $\fibre{\varphi}{t}$ is irreducible, then $\varphi$ is a $\PP^1$-bundle in a neighbourhood
of  $\fibre{\varphi}{t}$ \cite[II, Thm.2.8]{Kol96}. Thus we only have to consider points
$t \in T$ such that $S_t:=\fibre{\varphi}{t}$ is reducible. Since
$p_a(S_t)=0$ and $S_t=C_1+C_2$ is reduced, we see that $S_t$ is a
union of two $\PP^1$'s meeting transversally in a point. Since
$S_t=\varphi^* t$ is a Cartier divisor, this already implies 
\ref{l:descr-c:sing}. 
 
Let $\holom{\epsilon}{\hat S}{S}$ be the minimal resolution of the
singular points lying on $S_t$. Then we have
$$
K_{\hat S} \equiv \epsilon^* K_S - E,
$$
with $E$ an effective $\epsilon$-exceptional $\Q$-divisor. Denote by $\hat C_i$ the proper transform of $C_i$.
If $K_{\hat S} \cdot \hat C_i < -1$, then $\hat C_i$ deforms in $\hat S$ \cite[II, Thm.1.15]{Kol96}.
Yet $\hat C_i$ is an irreducible component of a reducible $\varphi \circ \epsilon$-fibre, so this is impossible.
So we have  
$$
K_S \cdot C_i \geq K_{\hat S} \cdot \hat C_i \geq -1
$$
for $i=1,2$. Since $K_S \cdot (C_1+C_2)=-2$, this implies that
$K_S \cdot C_i = -1$ and $E=0$. Thus $S$ has canonical singularities.
Since canonical surface singularities are Gorenstein we see that $-K_S$ is Cartier, $\varphi$-ample and defines an embedding
$$
S \subset \PP(V:=\varphi_*(\sO_S(-K_S)))
$$
into a $\PP^2$-bundle mapping each fibre onto a conic. This proves 
\ref{l:descr-c:cbdle}.

Since $\epsilon$ is crepant, the divisor $-K_{\hat S}$ is $\varphi \circ \epsilon$-nef. Moreover the proper transforms $\hat C_i$ are $(-1)$-curves in $\hat S$.
By Lemma \ref{lemmadualgraph} this proves \ref{l:descr-c:Ak}.
\end{proof}

The following fundamental lemma should be seen as an analogue of the basic fact
that a projective bundle over a curve contains at most one curve with negative self-intersection.

\begin{lemmaref}{\cite[Prop.2.4]{Miy04}}
\label{l:computation}%
\renewcommand{\theequation}{C\arabic{equation}}%
Let $S$ be a normal projective surface that is a conic bundle
$\holom{\varphi}{S}{T}$ over a smooth curve $T$,
and denote by $\Delta$ the discriminant locus.
Suppose that $\varphi$ has two disjoint sections $\sigma_1$ and
$\sigma_2$, both contained in the smooth locus of $S$.
Suppose moreover that for every $t \in \Delta$,
the fibre $F_t$ has a decomposition $F_t=F_{t,1} + F_{t, 2}$ such that 
\begin{equation} \label{symmetrysigma}
\sigma_i \cdot F_{t,j} = \delta_{i,j}.
\end{equation}
Eventually, assume that there exists a nef and big divisor $H$ on $S$
such that 
\begin{equation} \label{Hzero}
H \cdot \sigma_1 = H \cdot \sigma_2 = 0.
\end{equation}
Let $\holom{\epsilon}{\hat S}{S}$ be the minimal resolution. 
Let $\sigma$ be a $\varphi$-section, and $\hat \sigma \subset \hat S$ its proper transform. 
Then the following holds:
\begin{enumerate}
\item
\label{c:disjoint-neg}
If $(\hat \sigma)^2<0$, then $\sigma=\sigma_1$ or $\sigma = \sigma_2$.
\item
\label{c:disjoint-zero}
If $(\hat \sigma)^2=0$ then $\sigma$ is disjoint from $\sigma_1 \cup \sigma_2$. 
\end{enumerate}
\setcounter{equation}{0}
\end{lemmaref}

\begin{remark*}
In the situation above the conic bundle does not have any non-reduced
fibre, since there exists 
a section that is contained in the smooth locus.
\end{remark*}

\begin{proof}
{\em Preparation: contraction to a smooth ruled surface.}
Lemma~\ref{lemmadescribeS} applies to the surface $S$.
It follows that $S$ has an $A _{k_t}$-singularity ($k_t \geq 0$) in 
$F _{t,1} \cap F _{t,2}$ for every $t \in \Delta$, and no further 
singularity.
In particular, the dual graph of $\fibre{(\varphi \circ \epsilon)}{t}$
is as described in Lemma~\ref{lemmadualgraph}
for every $t \in \Delta$.

We consider the birational morphism 
\[
\hat \mu : \hat S \to S ^\flat
\]
defined as the composition, for every $t \in \Delta$, of the blow-down
of the proper transform ${\hat F}_{t,1}$ of $F _{t,1}$  and of all the
$k_t$ $(-2)$-curves contained in 
$(\varphi \circ \epsilon) ^{-1} (t)$.
Since $\hat \mu$ is a composition of blow-down of $(-1)$-curves, the
surface $S ^\flat$ is smooth.
By the rigidity lemma \cite[Lemma 1.15]{Deb01}, there is a morphism 
$\varphi ^\flat : S^\flat \to T$. All its fibres are irreducible
rational curves, so it is a $\PP ^1$-bundle by \cite[II,
Thm.2.8]{Kol96}.
Again by the rigidity lemma, $\hat \mu$ factors through $\epsilon$, 
i.e. there is a birational morphism $\mu : S \to S ^\flat$ such that 
$\hat \mu = \mu \circ \epsilon$; it is the contraction of all the curves
$F _{t,1}$, $t \in \Delta$.

Since $\sigma_1$ meets $F_{t, 1}$ in a smooth point of $S$, the proper
transforms $\hat \sigma_1$ and $\hat F_{t,1}$ meet in the same point. 
Thus (the successive images of) $\hat \sigma_1$ meets the exceptional
divisor of all the blow-downs of $(-1)$-curves composing $\hat \mu$,
and since the section 
$\sigma_1 ^\flat := \hat \mu (\hat \sigma_1)$ is
smooth, all the intersections are transversal. 
Vice versa we can say that $\hat S$ is obtained from $S ^\flat$ by
blowing up points on (the proper transforms of) $\sigma_1 ^\flat$. 

By the symmetry condition
\eqref{symmetrysigma} the curve $\sigma_2$ is disjoint from the
$\mu$-exceptional locus, so if we set $\sigma_2
^\flat:=\mu(\sigma_2)$,
then we have
$(\sigma_2 ^\flat)^2 = (\sigma_2)^2$.
Since $H \cdot \sigma_2=0$ the Hodge index theorem implies
$( \sigma_2 ^\flat)^2 =\sigma_2^2 <0$. 
In the notation of \cite[V,Ch.2]{Har77} 
$\holom{\varphi ^\flat}{S ^\flat}{T}$
is a ruled surface with invariant $-e :=  (\sigma_2 ^\flat)^2 > 0$.
In particular the Mori cone $\NE{S ^\flat}$ is generated by 
a general $\varphi ^\flat$-fibre $F$ and $\sigma_2 ^\flat$.
Since $\sigma_1 ^\flat \cdot \sigma_2 ^\flat = 0$
and $\sigma_1 ^\flat \cdot F =1$, we have
\begin{equation} \label{compare}
\sigma_1 ^\flat \equiv \sigma_2 ^\flat + e F.
\end{equation}

\noindent
{\em Conclusion.}
Let now $\sigma \subset S$ be a section that is distinct from both
$\sigma_1$ and $\sigma_2$. Then $\sigma ^\flat:=\mu(\sigma)$ 
is distinct from both $\sigma_1 ^\flat$ and $\sigma_2 ^\flat$. 
Since $\sigma ^\flat \neq \sigma_2 ^\flat$ we have
\begin{equation} \label{help1}
\sigma ^\flat \equiv \sigma_2 ^\flat + c F
\end{equation}
for some $c \geq e$ \cite[V, Prop.2.20]{Har77}.
Since $\sigma ^\flat \neq \sigma_1 ^\flat$ we have
\begin{equation} \label{help2}
\sigma ^\flat \cdot \sigma_1 ^\flat \geq \sum_{t \in \Delta} \tau_t,
\end{equation}
where $\tau_t$ is the intersection multiplicity of
$\sigma ^\flat$ and $\sigma_1 ^\flat$ at the point 
$F_t \cap \sigma_1 ^\flat$.
Denote by $\hat \sigma \subset \hat S$ the proper transform of 
$\sigma \subset S$, which is also the proper transform
of $\sigma ^\flat \subset S ^\flat$.
By our description of $\hat \mu$ as a sequence of blow-ups in 
$\sigma_1 ^\flat$ we obtain
\[
(\hat \sigma)^2 = (\sigma ^\flat)^2 - 
\sum_{t \in \Delta} \min (\tau_t, k_t+1)
\geq (\sigma ^\flat)^2 - \sum_{t \in \Delta} \tau_t.
\]
By \eqref{help2} this implies
\[
(\hat \sigma)^2 
\geq (\sigma ^\flat)^2 - \sigma ^\flat \cdot \sigma_1 ^\flat 
=  \sigma ^\flat \cdot (\sigma ^\flat - \sigma_1 ^\flat). 
\]
Plugging in \eqref{compare} and \eqref{help1} we obtain
\begin{equation}
\label{help3}
(\hat \sigma)^2 \geq c-e \geq 0.
\end{equation}
This shows statement a). 

Suppose now that $(\hat \sigma)^2=0$. Then by \eqref{help3} we have $c=e$,
hence $\sigma ^\flat \cdot \sigma_2 ^\flat=0$.
Being distinct, the two curves $\sigma ^\flat$ and $\sigma_2 ^\flat$
are therefore disjoint, and so are
their proper transforms $\hat \sigma$ and $\hat \sigma_2$.
Note now that $\epsilon$ is an isomorphism in a neighbourhood of $\hat \sigma_2$, so $\sigma = \epsilon(\hat \sigma)$
is disjoint from $\sigma_2=\epsilon(\hat \sigma_2)$.
In order to see that $\sigma$ and $\sigma_1$ are disjoint, we repeat the same argument but contract those fibre
components which meet $\sigma_2$. This proves statement b). 
\end{proof}

\section{The main construction} \label{S:work}

\paragraph{Set-up.}
\label{s:setup}
For the whole section, we
let $X \not \simeq \PP^n$ be a Fano manifold of dimension $n \geq 4$,
and suppose that
\begin{equation} \label{greatern}
-K_X \cdot C \geq n \quad 
\text{for all rational curves}\ C \subset X.
\end{equation}
This is the situation of Proposition \ref{p:zero};
let us show that it implies that the Picard number $\rho(X)$ equals 
$1$.

The Mori cone $\NE X$ generates $\NS X$ as a vector
space, and since $X$ is Fano the number of extremal rays of $\NE X$ is therefore at
least $\rho(X)$. Now the Ionescu--Wi\'sniewski inequality
\cite[Thm.0.4]{Ion86}, \cite[Thm.1.1]{Wis91} implies that the
contraction of any extremal ray of $\NE X$ is of fibre type, with a
base of dimension at most $1$. If $\rho(X)>1$, there are therefore at
least two Mori fibrations, with bases of dimension $1$; but then their
respective fibres are divisors in $X$, and since $n>2$ their
intersection contains a curve that is contracted by both fibrations, a
contradiction. 
\endparagraph

Recall that a family of 
\emph{minimal rational curves}
is an irreducible component $\minimal$ of $\Rat(X)$
such that the curves in $\minimal$ dominate $X$,
and for $x \in X$ general 
the algebraic set $\minimal_x ^\flat \subset \minimal$ parametrising curves passing through $x$ is proper.
We will use the following simple observation:

\begin{lemma} \label{l:nminimal}
In the situation of Proposition~\ref{p:zero}, 
let $l \subset X$ be a rational curve such that $-K_X \cdot l=n$. 
Then any irreducible component $\minimal$ of $\Rat X$ containing $[l]$
is a family of minimal rational curves.
\end{lemma}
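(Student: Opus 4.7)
The plan is to verify the two defining properties of a family of minimal rational curves, namely that the curves in $\minimal$ dominate $X$, and that $\minimal_x^\flat$ is proper for $x \in X$ general. Both will rest on the following \emph{rigidity principle}: since $\rho(X) = 1$, anticanonical degrees are preserved in flat families, so any $1$-cycle $\sum m_i C_i$ in the Chow-closure of $\minimal$ satisfies $\sum m_i (-K_X \cdot C_i) = -K_X \cdot l = n$, with each $C_i$ an irreducible rational curve (limits of rational curves being rational in characteristic zero, via the preservation of arithmetic genus in a flat family, or equivalently via the Kontsevich compactification). The hypothesis \eqref{greatern} then forces $-K_X \cdot C_i \geq n$ for each $i$, hence $\sum m_i = 1$, so the limit is already irreducible, reduced, and of anticanonical degree $n$. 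In short, no nontrivial degeneration of a curve in $\minimal$ occurs in Chow.

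Granted this, properness of $\minimal_x^\flat$ follows from the valuative criterion: any $\mathrm{Spec}\,K$-point of $\minimal_x^\flat$ extends through the projective Chow compactification, and by the rigidity principle the special fiber is an irreducible rational curve of degree $n$ through $x$, hence a point of $\minimal$ itself (the image of $\minimal$ in the Chow variety being closed by the same principle).

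For dominance, let $Y \subseteq X$ be the closed irreducible subvariety swept out by $\minimal$. Standard deformation theory at $[l]$ gives $\dim \minimal \geq -K_X \cdot l + n - 3 = 2n - 3$. Assuming for contradiction that $Y \ne X$, so $\dim Y \leq n - 1$, a fiber-dimension count on the universal family over $\minimal$ yields $\dim \minimal_y^\flat \geq 2n - 2 - \dim Y \geq n - 1$ for a general $y \in Y$. Choosing a general second point $z$ in the sublocus of $Y$ covered by curves in $\minimal_y^\flat$, the same fiber-dimension argument applied to evaluation at a second marked point gives $\dim \minimal_{y, z}^\flat \geq 1$. Then Mori's bend-and-break lemma produces a reducible or non-reduced limit in the Chow-closure of $\minimal_{y, z}^\flat$, contradicting the rigidity principle. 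Hence $Y = X$.

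The only non-routine ingredient is the rigidity principle; once it is in place, both conclusions reduce to a dimension estimate and a direct application of bend-and-break.
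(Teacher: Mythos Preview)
Your proof is correct and follows essentially the same route as the paper. The rigidity principle you spell out is precisely what the paper invokes via \cite[II,(2.14)]{Kol96} for properness, and your dominance argument is the paper's run contrapositively: rather than first extracting a positive-dimensional two-point family, the paper applies bend-and-break directly to conclude that the evaluation map from the universal family over $\minimal_x^\flat$ is generically finite onto $X$, whence its image has dimension at least $n$.
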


\begin{proof}
Condition~\eqref{greatern} implies the properness of $\minimal$
\cite[II, (2.14)]{Kol96}, so
we only have to show that the deformations of $l$ dominate $X$.
We have $\dim \minimal \geq 2n-3$ by \cite[II, Thm.1.2,
Thm.2.15]{Kol96}. 
Thus if $\minimal$ is not covering, then for a point $x \in X$ such
that $\minimal_x ^\flat \neq \emptyset$  
one has $\dim \minimal_x ^\flat \geq n-1$. Yet by the bend-and-break lemma
\cite[Prop.3.2]{Deb01}
the universal family over $\minimal_x ^\flat$ 
has a generically finite map to $X$. Thus the curves in 
$\minimal_x ^\flat$ dominate $X$, a contradiction.  
\end{proof}

\paragraph{Minimal rational curves and VMRTs.}
\label{s:minimal}
Since $X$ is Fano, it contains a rational curve $l$.
Since $X \not \simeq \PP^n$, there exists a rational
curve with $-K_X \cdot l=n$ by \cite{CMS02},
and by Lemma~\ref{l:nminimal} there exists a family of
minimal rational curves containing the point $[l] \in \Rat(X)$.
We fix once and for all such a family, which we call $\minimal$.

For $x \in X$ general, denote by $\minimal _x$ the normalisation of
the algebraic set $\minimal_x ^\flat \subset \minimal$ parametrising
curves passing through $x$.
Every member of $\minimal_x ^\flat$ is a free curve
(this follows from the argument of
\cite[II, proof of Thm.3.11]{Kol96}),
so $\minimal _x$ is smooth and has dimension $n-2 \geq 2$
\cite[II, (1.7) and (2.16)]{Kol96}.

By results of Kebekus, 
a general curve $[l] \in \minimal _x ^\flat$ 
is smooth \cite[Thm.3.3]{Keb02a}, 
and the \emph{tangent map}
$$
\tau_x : \minimal _x \rightarrow \PP(\Omega_{X, x})
$$
which to a general curve $[l]$ associates its tangent direction 
$T^\perp _{l, x}$ is a finite morphism \cite[Thm.3.4]{Keb02a}.
Its image $\vmrtx$ is called the 
\emph{variety of minimal rational tangents} (VMRT) at $x$.
The map $\tau_x$ is birational by \cite[Thm.1]{HM04},
so the normalisation of $\vmrtx$ is $\minimal _x$, which is smooth
(this is \cite[Cor.1]{HM04}).
Also, one can associate to a general point $v \in \vmrtx$
a unique minimal curve $[l] \in \minimal _x$. 
We denote by $\vmrt \subset \PP(\Omega_X)$ the 
\emph{total VMRT},
i.e.\ the closure of the locus covered by the VMRTs $\vmrtx$
for $x \in X$ general. Since $\minimal _x$ has dimension $n-2$, the
total VMRT $\vmrt$ is a divisor in $\PP(\Omega_X)$. 

For a general $[l] \in \minimal$, one has
\begin{equation} \label{splitstandard}
T_X|_l \simeq \sO_{\PP^1}(2) \oplus \sO_{\PP^1}(1)^{\oplus n-2} 
\oplus \sO_{\PP^1}
\end{equation}
\cite[Exercise~4.8.3]{Deb01}.
We call a minimal rational curve $[l] \in \minimal$ 
\emph{standard} if $l$ is smooth and the bundle 
$T_X|_l$ has the same splitting type as in \eqref{splitstandard}.
\endparagraph

\begin{paragraph}{Smoothing pairs of minimal curves.}
For a general point $x_1 \in X$, by the bend-and-break lemma
\cite[Prop.3.2]{Deb01}
the curves parametrised by $\minimal_{x_1}$ cover
a divisor $D_{x_1} \subset X$. 
This divisor is ample because $\rho(X)=1$, so for $x_2 \in X$ 
and $[l_2] \in \minimal_{x_2}$ 
the curve $l_2$ intersects $D_{x_1}$.
Thus for a pair of general points $x_1, x_2 \in X$ we can find 
a chain of two standard curves $l_1 \cup l_2$ connecting
the points $x_1$ and $x_2$. 
\par Let $\mbox{loc}^1_{x_1}$ be
the locus covered by {\em all}
the minimal rational curves of $X$ passing through $x_1$.
It is itself a divisor, but may be bigger than $D_{x_1}$
since in general there are finitely many families of minimal curves.
We choose a pair of general points $x_1,x_2 \in X$
such that $x_2 \not\in \mbox{loc}^1_{x_1}$ 
(which implies $x_1 \not\in \mbox{loc}^1_{x_2}$),
and consider a chain of two standard curves $l_1 \cup l_2$ connecting
the points $x_1$ and $x_2$ as above.
\par By \cite[II, Ex.7.6.4.1]{Kol96} the union
$l_1 \cup l_2$ is dominated by a transverse union $\PP^1 \cup \PP^1$.
Since both rational curves are free
we can smooth the tree $\PP^1 \cup \PP^1$ keeping the point $x_1$ fixed \cite[II, Thm.7.6.1]{Kol96}. 
Since $x_1$ is general in $X$ this defines a family of rational curves
dominating $X$, 
and we denote by $\binimal$ the normalisation of the irreducible
component of $\Chow(X)$ containing these rational curves. 
\end{paragraph}

\paragraph{}%
Since a general member $[C]$ of the family $\binimal$ is free
and $-K_X\cdot C =2n$,
we have $\dim \binimal = 3n-3$.
We pick an arbitrary irreducible component of the
subset of $\binimal$ parametrising cycles containing $x_1$, and let
$\binimal _{x_1}$ be its normalisation; then we have
$\dim \binimal _{x_1} = 2n-2$.
 Let $\Univ _{x_1}$ be the normalisation of the universal
family of cycles over $\binimal _{x_1}$.
The evaluation map $\ev _{x_1}: \Univ _{x_1} \to X$ is
surjective:
its image is irreducible, and it contains both 
the divisor $D _{x_1}$
(because it is contained in the image of the restriction of 
$\ev_{x_1}$ to those members of $\binimal_{x_1}$ that contain a
minimal curve through $x_1$)
and the point $x_2 \not\in D _{x_1}$.
\par Next, 
we choose an arbitrary irreducible component
of the subset of $\binimal$ parametrising cycles passing through $x_1$
and $x_2$, 
and let $\binimal_{x_1, x_2}$ be its normalisation,
$\Univ_{x_1, x_2}$ the normalisation of the universal family over
$\binimal_{x_1, x_2}$.
We denote by
$$
\holom{q}{\Univ_{x_1, x_2}}{\binimal_{x_1, x_2}},
\qquad
\holom{\ev}{\Univ_{x_1, x_2}}{X}
$$
the natural maps. 
It follows from the considerations above that $\binimal _{x_1,x_2}$ is
non-empty of dimension $n-1$.

By construction, a general curve $[C] \in \binimal_{x_1, x_2}$ is smooth
at $x_i$, $i \in \{1,2\}$,
so the preimage $\fibre{\ev}{x_i}$ contains a unique divisor
$\sigma_i$ that surjects onto $\binimal_{x_1, x_2}$. 
Since $\ev$ is finite on the $q$-fibres and $\binimal_{x_1, x_2}$ is normal, 
we obtain that the degree one map $\sigma_i \rightarrow \binimal_{x_1, x_2}$ is an isomorphism.  
We call the divisors $\sigma_i$ the distinguished sections of $q$.
\endparagraph

Let $\Delta \subset \binimal_{x_1, x_2}$ be the locus
parametrising non-integral cycles. 

\begin{lemma} \label{l:discriminant}
In the situation of Proposition \ref{p:zero} and using the notation
introduced above, let  
$$
C = \sum a_i l_i
$$
be a non-integral cycle corresponding to a point $[C] \in \Delta$. 
Then $C=l_1 + l_2$, with the $l_i$ minimal rational curves
such that $x_i \in l_j$ if and only if $i=j$.
\end{lemma}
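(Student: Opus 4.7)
The plan is to argue purely by intersection-theoretic bookkeeping combined with the generic choice of $x_1, x_2$. First, since $\binimal_{x_1,x_2}$ is an irreducible component of a Chow scheme, the $(-K_X)$-degree is constant in the family, equal to $-K_X \cdot (l_1+l_2) = 2n$ computed on the general member (the smoothing of a chain of two standard curves). Moreover, each irreducible component $l_i$ of the cycle $C$ is rational: a cycle that is a specialisation in $\Chow X$ of rational curves has all its components rational, by \cite[II, 2.2]{Kol96} (or, equivalently, by the fact that the normalisation of a limit of $\PP^1$'s is a disjoint union of $\PP^1$'s).

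Writing $C = \sum a_i l_i$, the assumption \eqref{greatern} gives $-K_X \cdot l_i \geq n$ for each $i$, so
\[
2n \;=\; -K_X \cdot C \;=\; \sum_i a_i(-K_X \cdot l_i) \;\geq\; n \sum_i a_i.
\]
Hence $\sum a_i \leq 2$, and since $C$ is non-integral we have $\sum a_i = 2$. Two cases remain: either $C = 2l$ for a single rational curve $l$ with $-K_X \cdot l=n$, or $C = l_1+l_2$ with $l_1 \neq l_2$ and $-K_X \cdot l_i = n$ for $i=1,2$. In either case every component is a rational curve of anticanonical degree $n$, so by Lemma~\ref{l:nminimal} it lies in some family of minimal rational curves on $X$, and in particular is contained in $\mbox{loc}^1_{x}$ for any point $x$ on it.

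Recall we chose $x_1,x_2$ general with $x_2 \not\in \mbox{loc}^1_{x_1}$, so no minimal rational curve on $X$ can simultaneously contain $x_1$ and $x_2$. In the first case above, $l$ would be such a curve since $C$, and therefore its support $l$, passes through both $x_1$ and $x_2$; this rules out that case. So $C = l_1+l_2$ with $l_1,l_2$ distinct minimal rational curves. The support of $C$ contains $x_1$ and $x_2$; if both points lay on the same component $l_i$, this component would again be a minimal curve through both, contradiction. Hence, after relabeling, $x_1 \in l_1$ and $x_2 \in l_2$. The same argument shows $x_1 \notin l_2$ (otherwise $l_2$ would contain both points) and similarly $x_2 \notin l_1$, which gives the conclusion $x_i \in l_j \Leftrightarrow i=j$.

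The main point, and the only non-formal step, is the invariance of $-K_X \cdot C$ under specialisation in $\Chow X$ and the rationality of the components of the limit cycle; both are standard for families of $1$-cycles, and once they are in place the rest is a short arithmetic argument combined with the genericity of $(x_1,x_2)$.
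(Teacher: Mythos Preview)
Your proof is correct and follows essentially the same approach as the paper's: use constancy of $-K_X\cdot C=2n$ in the family and rationality of the components (\cite[II, 2.2]{Kol96}), combine with the bound $-K_X\cdot l_i\ge n$ to force $\sum a_i=2$, then eliminate the case $C=2l$ and pin down the incidence $x_i\in l_j\Leftrightarrow i=j$ via the genericity condition $x_2\notin\loc^1_{x_1}$. The only cosmetic difference is that the paper argues by first fixing a component through $x_1$ rather than writing the global inequality $\sum a_i\le 2$, but the content is identical.
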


\begin{remark*}
Note that we do not claim that the curves $l_i$ belong to the family $\minimal$. However by construction
of the family $\binimal$ as smoothings of pairs $l_1 \cup l_2$ in $\minimal$ there exists an irreducible component $\Delta_{\minimal} \subset \Delta$ such that $l_i \in \minimal$.
\end{remark*}

\begin{proof} 
By \cite[II, Prop.2.2]{Kol96} all the irreducible components $l_i$ are rational curves.
We can suppose that up to renumbering one has $x_1 \in l_1$. 
If $a_1 \geq 2$, then $-K_X \cdot C=2n$ and $-K_X \cdot l_1 \geq n$ implies
that $C= 2 l_1$ and $l_1$ is a minimal rational curve. Yet this contradicts the assumption $x_2 \not\in \mbox{loc}^1_{x_1}$.
Thus we have $a_1=1$ and since $C$ is not integral there exists a second irreducible component $l_2$. 
Again $-K_X \cdot C=2n$ and $-K_X \cdot l_i \geq n$ implies $C=l_1 + l_2$ and the $l_i$ are minimal rational curves by Lemma \ref{l:nminimal}.
The last property now follows by observing that $x_2 \not\in \mbox{loc}^1_{x_1}$ implies that $x_1 \not\in \mbox{loc}^1_{x_2}$.
\end{proof}

By \cite[II, Thm.2.8]{Kol96},
the fibration \holom{q}{\Univ_{x_1, x_2}}{\binimal_{x_1, x_2}} is a
$\PP^1$-bundle over the open set $\binimal_{x_1, x_2} \setminus
\Delta$. Although Lemma \ref{l:discriminant} essentially says 
that the singular fibres are reducible conics, it is a priori not clear
that $q$ is a conic bundle 
(cf.\ Definition~\ref{definitionconicbundle}). 
This becomes true after we make a base change to a smooth curve.

\begin{lemma} \label{l:basechange}
In the situation of Proposition \ref{p:zero} and using the notation introduced above,
let $Z \subset \binimal_{x_1, x_2}$ be a curve
such that a general point of $Z$ parametrises an irreducible curve. 
Then there exists a finite morphism
$T \rightarrow Z$ such that the normalisation $S$ of the fibre product $\Univ_{x_1, x_2} \times_{\binimal_{x_1, x_2}} T$ has a conic bundle
structure $\varphi: S \rightarrow T$ that satisfies the conditions of Lemma \ref{l:computation}. 
\end{lemma}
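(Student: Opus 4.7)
My plan is to take $T$ to be the normalisation $\tilde Z$ of $Z$ (a smooth curve endowed with a finite map to $Z$), let $S$ be the normalisation of $\Univ_{x_1,x_2} \times_{\binimal_{x_1,x_2}} \tilde Z$, and then verify that the induced morphism $\varphi: S \to T$ is a conic bundle satisfying all four hypotheses of Lemma~\ref{l:computation}. By the assumption on $Z$, the general fibre of $\varphi$ is a smooth $\PP^1$; and by Lemma~\ref{l:discriminant} every reducible fibre decomposes as $l_1 + l_2$ with both components of multiplicity one, so all fibres are reduced with at most two components. Lemma~\ref{lemmadescribeS} then yields directly that $\varphi$ is a conic bundle, with at worst $A_k$-singularities located at the nodes $l_1 \cap l_2$ of the reducible fibres.

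Next I would pull back the distinguished sections of $q$ to sections $\sigma_1, \sigma_2$ of $\varphi$; they are sent by the evaluation morphism $\ev: S \to X$ to the distinct points $x_1, x_2$ and are therefore disjoint. Lemma~\ref{l:discriminant} further guarantees that the node $l_1 \cap l_2$ of each reducible fibre is distinct from both $x_1$ and $x_2$, so each $\sigma_i$ avoids the singular locus of $S$ and lies in the smooth locus. With the decomposition $F_t = F_{t,1} + F_{t,2}$ taking $F_{t,i}$ to be the component through $x_i$, the required relation $\sigma_i \cdot F_{t,j} = \delta_{ij}$ is then tautological. For the nef and big divisor I would take $H := \ev^* A$ with $A$ ample on $X$: it is nef, it satisfies $H \cdot \sigma_i = 0$ because $\ev(\sigma_i) = \{x_i\}$ is a point, and it is big because distinct $[C] \in Z$ correspond to distinct $1$-cycles of $X$, so the one-parameter family sweeps out a genuine surface and $\ev: S \to X$ has two-dimensional image.

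The subtlest point I anticipate is verifying that the normalisation really preserves reducedness of the fibres (since a non-transverse intersection of $Z$ with the discriminant locus of $q$ could in principle cause scheme-theoretic trouble) and that the sections truly land in the smooth locus of $S$. Both issues have explicit remedies: the first by composing $\tilde Z \to Z$ with a further finite cover $T \to \tilde Z$ sufficiently ramified over the problematic points, which still produces the required finite morphism $T \to Z$; the second by the explicit description in Lemma~\ref{l:discriminant} of the reducible fibres, which places the nodes strictly away from $x_1$ and $x_2$.
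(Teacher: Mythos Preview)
Your proposal is correct and follows essentially the same route as the paper: base change to a smooth curve, normalise, invoke Lemma~\ref{l:discriminant} and Lemma~\ref{lemmadescribeS} to obtain the conic bundle structure, then verify the section and positivity hypotheses of Lemma~\ref{l:computation} using the evaluation map to $X$.

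The one tactical difference is how the two sections are produced. The paper does not use the distinguished sections $\sigma_i \subset \Univ_{x_1,x_2}$ directly; instead it observes that the loci in the normalised pull-back $N$ lying over $x_1,x_2$ are a priori only \emph{multisections}, and performs a further finite base change $T \to \tilde Z$ to turn them into honest sections. Your shortcut---pulling back the already-existing sections $\sigma_i$ of $q$ and lifting them through the normalisation via the universal property (since $\tilde Z$ is smooth, hence normal)---is legitimate and slightly cleaner. Both approaches lead to the same verification that the sections lie in the smooth locus (via Lemma~\ref{l:discriminant}, the node of a reducible fibre maps to $l_1 \cap l_2 \subset X$, which avoids $x_1$ and $x_2$) and satisfy $\sigma_i \cdot F_{t,j} = \delta_{ij}$.

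Your caution about reducedness of the special fibres is well placed; the paper simply asserts this as a consequence of Lemma~\ref{l:discriminant} (which, strictly speaking, is a statement about cycles rather than scheme-theoretic fibres), while you note the standard remedy of a further ramified base change if needed. Your bigness argument for $H = \ev^*A$ is also the same as the paper's.
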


\begin{proof}
\setlength{\parskip}{0mm}
Let $\nu: \tilde Z \rightarrow Z$ be the normalisation, and let $N$
be the normalisation of $\Univ_{x_1, x_2}
\times_{\binimal_{x_1, x_2}}  \tilde Z$,
$f_N: N \rightarrow X$ the morphism induced by
$\ev: \Univ_{x_1, x_2} \rightarrow X$. Since all the curves
pass through $x_1$ and $x_2$ there exists a curve $Z_1 \subset N$
(resp. $Z_2 \subset N$) that is contracted by $f_N$ onto the point
$x_1$ (resp. $x_2$). Since $\ev$ is finite on the $q$-fibres, the curves
$Z_1$ and $Z_2$ are multisections 
of $N \rightarrow \tilde Z$. If $\tilde Z_i$ is the normalisation of
$Z_i$, then the fibration $(N \times_{\tilde Z} \tilde Z_i)
\rightarrow \tilde Z_i$ has a section 
given by $c \mapsto (c,c)$. Thus there exists a finite base change $T
\rightarrow \tilde Z$ such that the normalisation 
$\varphi: S \rightarrow T$ of the fibre product $(\Univ_{x_1, x_2} \times_{\binimal_{x_1, x_2}} T) \rightarrow T$ has a natural morphism
$f: S \rightarrow X$ induced by $\ev: \Univ_{x_1, x_2} \rightarrow X$
and contracts two $\varphi$-sections $\sigma_1$ and $\sigma_2$
on $x_1$ and $x_2$ respectively.

Since $Z \not\subset \Delta$, the general $\varphi$-fibre is
$\PP^1$. Moreover by Lemma~\ref{l:discriminant} all the
$\varphi$-fibres are reduced and have at most two irreducible
components. By Lemma~\ref{lemmadescribeS} this implies that $\varphi$
is a conic bundle 
and if $s \in S_{\sing}$, then $F_{\varphi(s)}$ is a reducible conic and the two irreducible components meet in $s$.
Thus we have $\sigma_i \subset S_{nons}$, since otherwise both irreducible components would pass through $x_i$,
thereby contradicting the property that $x_2 \not\in  \loc^1_{x_1}$. 
For the same reason we can decompose any reducible $\varphi$-fibre $F_t$  
by defining $F_{t,i}$ as the unique component meeting the section $\sigma_i$. Since $\sigma_i \cdot F=1$ for a general $\varphi$-fibre
we see that \eqref{symmetrysigma} holds. Condition~\eqref{Hzero}
holds with $H$ the pull-back of an ample divisor on $X$.  
\end{proof}

From this one deduces with Lemma~\ref{l:computation} the following
statement, in the spirit of the 
bend-and-break lemma \cite[Prop.3.2]{Deb01}.

\begin{lemma}
\label{l:bandb}
The restriction of the evaluation map 
$\ev: \Univ_{x_1, x_2} \rightarrow X$ to the complement of 
$\sigma_1 \cup \sigma_2$ is quasi-finite.
In particular $\ev$ is generically finite onto its image.
\end{lemma}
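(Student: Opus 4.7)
The plan is to argue by contradiction and reduce to Lemma~\ref{l:computation} applied to a conic bundle constructed via Lemma~\ref{l:basechange}. Assume for contradiction that there exist a point $y \in X$ and an irreducible curve $Z' \subset \Univ_{x_1, x_2}$, not contained in $\sigma_1 \cup \sigma_2$, with $\ev(Z') = \{y\}$. I would first show that $Z := q(Z')$ is a curve in $\binimal_{x_1, x_2}$: if $Z$ were a point, $Z'$ would lie in a single $q$-fibre, which by Lemma~\ref{l:discriminant} is either an irreducible conic or a union of two minimal rational curves, and in either case $\ev$ is non-constant on every irreducible component, contradicting $\ev(Z') = \{y\}$. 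Thus $Z$ is a curve, and every cycle parametrised by $Z$ passes through $y$.

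Assume first that a general member of $Z$ is irreducible, so that Lemma~\ref{l:basechange} applies: after a finite base change $T \to Z$, it yields a conic bundle $\varphi : S \to T$ satisfying the hypotheses of Lemma~\ref{l:computation}, together with a morphism $f : S \to X$ contracting the distinguished sections $\sigma_1, \sigma_2$ to $x_1, x_2$. The preimage of $Z'$ in $S$ is a multisection of $\varphi$ contracted by $f$ to $y$ and disjoint from $\sigma_1 \cup \sigma_2$; after a further finite base change I would extract from it a $\varphi$-section $\sigma \subset S$, with $\sigma \neq \sigma_1, \sigma_2$. Setting $H := f^* A$ for $A$ an ample divisor on $X$, the divisor $H$ is nef and big, the bigness coming from the fact that the distinct conics $C_c$, for $c \in Z$, sweep out a two-dimensional subvariety of $X$. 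Let $\epsilon : \hat S \to S$ be the minimal resolution and $\hat \sigma$ the proper transform of $\sigma$. Then $\epsilon^* H \cdot \hat \sigma = H \cdot \sigma = 0$ because $\sigma$ is contracted by $f$; and since $\hat \sigma$ is a nonzero effective divisor --- hence not numerically trivial --- the Hodge index theorem forces $(\hat \sigma)^2 < 0$. Lemma~\ref{l:computation}\,a) then forces $\sigma \in \{\sigma_1, \sigma_2\}$, the desired contradiction.

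The degenerate case $Z \subset \Delta$ must be treated by a parallel argument: each cycle in $Z$ decomposes as $l_1(c) + l_2(c)$ with $x_i \in l_i(c)$, and after restricting to an irreducible subset I may assume $y$ lies always on, say, $l_1(c)$. Mori's bend-and-break together with assumption~(\ref{greatern}) --- which forbids any rational curve of $-K_X$-degree below $n$, hence a fortiori any positive-dimensional family of minimal rational curves through two fixed points --- forces $l_1$ to be a fixed minimal rational curve through $x_1$ and $y$. An all-fibres-reducible variant of Lemma~\ref{l:basechange} then furnishes a conic bundle to which the same Hodge-index/Lemma~\ref{l:computation}\,a) argument can be applied, yielding the contradiction. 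This verification --- that the hypotheses of Lemma~\ref{l:computation}, in particular the bigness of $H$, persist in the all-reducible setting --- is the step I anticipate as the most delicate. Finally, the second assertion of the statement is automatic, since $\sigma_1 \cup \sigma_2$ is a proper closed subset of $\Univ_{x_1, x_2}$ and $\ev$ is quasi-finite on its complement.
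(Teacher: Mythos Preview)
Your treatment of the case $Z \not\subset \Delta$ is essentially the paper's: reduce via Lemma~\ref{l:basechange} to a conic bundle, extract a section $\sigma$ from $Z'$ after a further base change, and derive $(\hat\sigma)^2<0$ from the Hodge index theorem applied to $\epsilon^*H$, contradicting Lemma~\ref{l:computation}\,\ref{c:disjoint-neg}. (Your explicit justification of the bigness of $H$ is a nice touch; the paper leaves this implicit.)

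The degenerate case $Z \subset \Delta$ is where your plan breaks down. Once every fibre over $Z$ is reducible with the two components globally distinguished by the sections $\sigma_1,\sigma_2$ (via Lemma~\ref{l:discriminant}), the surface you obtain after base change is not an irreducible conic bundle at all: it is the union of two $\PP^1$-bundles glued along a section, hence not normal, and Lemma~\ref{l:computation} (which requires $S$ to be a normal surface) simply does not apply. Your bend-and-break step, while correct in itself, does not help and in fact highlights the difficulty: once $l_1$ is fixed, the $l_1$-component of the normalised surface is $l_1 \times T$ with one-dimensional image in $X$, so $H$ restricted to that component is not big and the Hodge-index argument is unavailable precisely where you need it.

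The paper's route here is both different and simpler: it abandons Lemma~\ref{l:computation} and instead observes that the normalisation of $q^{-1}(Z)$ is a disjoint union of two $\PP^1$-bundles; the three contracted horizontal curves $\sigma_1,\sigma_2,\sigma$ distribute among the two components, so by pigeonhole one ruled surface carries two such curves, contradicting the elementary fact that a ruled surface has at most one curve of negative self-intersection. No bend-and-break, no variant of Lemma~\ref{l:basechange}, no Lemma~\ref{l:computation}.
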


\begin{proof}
We argue by contradiction. Since $\ev$ is finite on the $q$-fibres there exists a curve
$Z \subset \binimal _{x_1, x_2}$ such that the natural map from the
surface $\fibre{q}{Z}$ onto $\ev (\fibre{q}{Z})$ 
contracts three disjoint curves $\sigma_1, \sigma_2$ and $\sigma$ onto
the points $x_1, x_2$ and $x$. \\
\indent
If $Z \not\subset \Delta$, then by Lemma~\ref{l:basechange} 
we can suppose, possibly up to a finite base change,
that $\fibre{q}{Z} \rightarrow Z$
satisfies the conditions \eqref{symmetrysigma} of Lemma~\ref{l:computation}.
After a further base change we can assume that $\sigma$ is a section.
Since $\sigma$ is
contracted by $\ev$ we have $\sigma^2<0$.  
By Lemma~\ref{l:computation},\ref{c:disjoint-neg},
this implies $\sigma=\sigma_1$ or
$\sigma=\sigma_2$, a contradiction. \\
\indent
If $Z \subset \Delta$, then all the fibres over $Z$ are unions of two
minimal rational curves. Thus the normalisation of $\fibre{q}{Z}$ is a
union of two $\PP^1$-bundles mapping onto $Z$ and by construction they
contain three curves which are mapped onto points. 
However a ruled surface contains at most one contractible curve, a
contradiction. 
\end{proof}

\paragraph{}%
Since $\dim \Univ _{x_1,x_2} = \dim X$, one deduces from
Lemma~\ref{l:bandb} above that the cycles $[C] \in \binimal$ 
passing through $x_1, x_2$ cover the manifold $X$. 
By \cite[4.10]{Deb01} 
this implies that a general member $[C] \in \binimal_{x_1, x_2}$
is a $2$-free rational curve \cite[Defn.4.5]{Deb01}.
Since $-K_X \cdot C=2n$, this forces
\begin{equation} \label{splittingconic}
f^* T_X \simeq \sO_{\PP^1}(2)^{\oplus n},
\end{equation}
where $f: \PP^1 \rightarrow C \subset X$ is the normalisation of $C$.
As a consequence, one sees from \cite[II, Thm.3.14.3]{Kol96} that a
general member $[C] \in \binimal$ is a {\em smooth}
rational curve in $X$. 

Let $\HomWz \subset \mbox{Hom}(\PP^1, X)$ be the irreducible open set
parametrising morphisms 
$f: \PP^1 \rightarrow X$ such that the image $C:=f(\PP^1)$ is smooth,
the associated cycle $[C] \in \Chow(X)$ 
is a point in $\binimal$, and $f^* T_X$ has the splitting type
\eqref{splittingconic}. 
By what precedes, the image of $\HomWz$ in $\binimal$ under the
natural map $\mbox{Hom}(\PP^1, X) \rightarrow \Chow(X)$ is a dense
open set $\binimal^\circ \subset \binimal$.
\endparagraph

\paragraph{}%
Denote by $\holom{\pi}{\PP(\Omega_X)}{X}$ the projection map.
We define an injective map 
$$
i: \HomWz \hookrightarrow \mbox{Hom}(\PP^1, \PP(\Omega_X))
$$
by mapping $f: \PP^1 \rightarrow X$ 
to the morphism $\tilde {f}: \PP^1 \to \PP(\Omega_X)$ corresponding to
the invertible quotient $f^* \Omega_X \to \Omega _{\PP^1}$.
Correspondingly, for $[C] \in \binimal ^\circ$ with normalisation $f$, 
we call $[\tilde C]$ the member of $\Chow(\PP(\Omega_X))$ corresponding
to the lifting $\tilde f$.

We let $\HomWt$ be the image of $i$.
Note that it parametrises a family of rational curves that dominates
$\PP(\Omega_X)$, 
but it is not an irreducible component of $\mbox{Hom}(\PP^1,
\PP(\Omega_X))$. 
Indeed, $\HomWt$ is contained in a (much bigger) irreducible component
defined by morphisms corresponding to arbitrary quotients 
$f^* \Omega_X \twoheadrightarrow \sO_{\PP^1}(-2)$.
\endparagraph

The following property is well-known to experts.
Since $\HomWt$
is not an open set of the Hom-space $\mbox{Hom}(\PP^1,
\PP(\Omega_X))$, we have to adapt the proof of
\cite[II,Prop.3.7]{Kol96}.

\begin{lemma} \label{l:badset}
In the situation of Proposition~\ref{p:zero},
let $\vmrt_0 \subset \vmrt$ be a dense, Zariski open set in the total VMRT $\vmrt$, and let $\tilde C:=\tilde f(\PP^1)$ 
be a rational curve parametrised
by a general point of $\HomWt$. Then one has 
$$
(\vmrt \cap \tilde C) \subset (\vmrt_0 \cap \tilde C).
$$
\end{lemma}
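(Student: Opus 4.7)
The plan is to reduce the statement to a dimension count. Set $Z := \vmrt \setminus \vmrt_0$, a proper closed subset of $\vmrt$, and hence of dimension at most $2n-3$ since $\vmrt \subset \PP(\Omega_X)$ is a divisor. I will show that, for $f$ general in the irreducible open subscheme $\HomWz$, the lifted curve $\tilde C = \tilde f(\PP^1)$ does not meet $Z$; this is equivalent to the desired inclusion $\vmrt\cap\tilde C\subset\vmrt_0\cap\tilde C$, after identifying general points of $\HomWz$ with general points of $\HomWt$ via $i$.

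Consider the evaluation morphism
\[
E \colon \HomWz \times \PP^1 \to \PP(\Omega_X), \qquad (f,t) \mapsto \tilde f(t).
\]
The crux will be to prove that $E$ is a smooth morphism of relative dimension $n+2$ on all of $\HomWz\times\PP^1$. Granting this, each irreducible component of $E^{-1}(Z)$ has dimension $\dim Z + n + 2 \leq 3n-1$, which is strictly smaller than $\dim \HomWz = 3n$, so the projection of $E^{-1}(Z)$ to $\HomWz$ is a proper closed subset; its complement consists of morphisms $f$ for which $\tilde f(\PP^1)\cap Z = \emptyset$, as wanted.

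The main obstacle, as noted in the remark preceding the lemma, is that $\HomWt$ is not open in $\Hom(\PP^1,\PP(\Omega_X))$, so the standard smoothness of evaluation maps on the full Hom-scheme does not directly transport to the restricted family. For the verification, note that $\HomWz$ is smooth of dimension $3n$ at every point since $H^1(\PP^1,f^*T_X) = H^1(\sO(2)^n) = 0$, hence it suffices to check surjectivity of $dE$ at each $(f,t)$. I will decompose $T_{\tilde f(t)}\PP(\Omega_X)$ into its horizontal summand $T_{f(t)}X$ and its vertical summand $L^\vee\otimes(T_{f(t)}X/L)$, where $L=\langle df/dt(t)\rangle$; a tangent vector $\dot f\in H^0(f^*T_X) = T_f\HomWz$ contributes to $dE$ horizontally by its value $\dot f(t)$, and vertically by the class of $d\dot f/dt(t)$ modulo $L$, obtained by differentiating in $s$ the tangent direction of $f_s(\PP^1)$ at $f_s(t)$. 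Surjectivity of the combined map $\dot f \mapsto (\dot f(t),\,d\dot f/dt(t)\bmod L)$ then reduces to surjectivity of the $1$-jet evaluation $H^0(\PP^1,\sO(2)^n) \to J^1_t(\sO(2)^n) \simeq \C^{2n}$, which holds precisely by virtue of the standard splitting \eqref{splittingconic}.
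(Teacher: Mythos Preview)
Your argument is correct and follows essentially the same approach as the paper: both reduce the statement to a dimension count showing that the locus of morphisms in $\HomWz$ whose lift meets $Z=\vmrt\setminus\vmrt_0$ has dimension strictly less than $3n$. The paper computes directly that for fixed $p\in\PP^1$ and $z\in\PP(\Omega_X)$ the set of $f\in\HomWz$ with $\tilde f(p)=z$ has bounded dimension, whereas you package this as smoothness of the evaluation $E$ via surjectivity of the $1$-jet map for $\sO_{\PP^1}(2)^{\oplus n}$; these are the same computation, and your relative-dimension bound $n+2$ matches the paper's fibre count (up to the extra $\PP^1$ factor). One cosmetic remark: your formula for the vertical component of $dE(\dot f)$ as $d\dot f/dt(t)\bmod L$ is literally correct only after choosing a local trivialisation of $T_X$ (or restricting to $\dot f$ with $\dot f(t)=0$), but since the horizontal map $\dot f\mapsto\dot f(t)$ is already surjective this does not affect the conclusion.
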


\begin{proof}\setlength{\parskip}{0mm}
Set $Z:= \vmrt \setminus \vmrt_0$. 
A point $z \in \PP(\Omega_X)$ is $z=(v_z^\perp,x)$, where
$\C v_z \subset T_{X,x}$ is a tangent direction in $X$ at $x=\pi(z)$.
So for all $p \in \PP^1$, $z=(v_z^\perp,x) \in \PP(\Omega_X)$,
the morphisms $[\tilde f] \in \HomWt$
mapping $p$ to $z$ correspond to morphisms $f: \PP^1
\rightarrow X$ in $\HomWz$ mapping $p$ to $x$ 
with tangent direction $\C v_z$.
Since $f$ has the splitting type \eqref{splittingconic},
the set of these morphisms has dimension exactly $n$.
It follows that
\[
\Hom _{\binimal,Z} ^\sim :=
\bigl\{ 
[\tilde f] \in \HomWt \ | \ \tilde f(\PP^1) \cap Z \neq \emptyset 
\bigr\}
=
\bigcup_{z \in Z} \bigcup_{p \in \PP^1}
\bigl\{
[\tilde f] \in \HomWt \ | \tilde f(p) = z
\bigr\}
\]
has dimension at most $\dim Z + 1 +n$.

Now $\vmrt \subset \PP(\Omega_X)$
is a divisor, and $Z$ has codimension at least one 
in $\vmrt$, so $Z$ has 
dimension at most $2n-3$,
and the set $\Hom _{\binimal,Z} ^\sim$ above has dimension at most
$3n-2$.
Since $\HomWz$ has dimension $3n$ and $\HomWz \rightarrow \HomWt$ is
injective, a general point $[\tilde f] \in \HomWt$ is not in 
$\Hom _{\binimal,Z} ^\sim$.
\end{proof}

We need one more technical statement:

\begin{lemma} \label{l:notcontained}
In the situation of Proposition \ref{p:zero} and using the notation
introduced above, let $[f] \in \HomWz$ be a general point. Then for
\emph{every} $x \in f(\PP^1)$ we have $f(\PP^1) \not\subset \loc^1_{x}$.  
\end{lemma}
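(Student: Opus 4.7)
The plan is to argue by contradiction. If the lemma fails, then the incidence
\[
\tilde I := \bigl\{ ([f], p) \in \HomWz \times \PP^1 : f(\PP^1) \subset \loc^1_{f(p)} \bigr\}
\]
dominates $\HomWz$, and I rule this out by a dimension count. First, I would observe that $\mathrm{PGL}_2$ acts on $\HomWz \times \PP^1$ via $g \cdot ([f], p) = ([f \circ g^{-1}], g(p))$, preserving $\tilde I$ because both $f(\PP^1)$ and $f(p)$ are constant along orbits. Since $\mathrm{PGL}_2$ acts transitively on $\PP^1$, the equivariant projection $\tilde I \to \PP^1$ has pairwise isomorphic fibres; denoting by $\tilde I_0$ the fibre over $0 \in \PP^1$, the dominance assumption yields
\[
\dim \tilde I_0 = \dim \tilde I - 1 \geq \dim \HomWz - 1 = 3n - 1.
\]

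Next, I would use $f^*T_X \simeq \O_{\PP^1}(2)^{\oplus n}$ from \eqref{splittingconic} to show that the three-point evaluation $e : \HomWz \to X^3$, $[f] \mapsto (f(0), f(1), f(\infty))$, is \'etale. Indeed $f^*T_X \otimes \O_{\PP^1}(-3) \simeq \O_{\PP^1}(-1)^{\oplus n}$ has vanishing $H^0$ and $H^1$, so the differential of $e$ at $[f]$ is an isomorphism; as $\dim \HomWz = \dim X^3 = 3n$, the map $e$ is \'etale onto an open subset of $X^3$ and in particular has finite fibres. Consequently $\dim e(\tilde I_0) = \dim \tilde I_0 \geq 3n - 1$.

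To conclude, for every $[f] \in \tilde I_0$ the points $f(1)$ and $f(\infty)$ lie in $f(\PP^1) \subset \loc^1_{f(0)}$, so $e(\tilde I_0)$ is contained in
\[
\Omega := \bigl\{ (x, x_1, x_2) \in X^3 : x_1, x_2 \in \loc^1_x \bigr\},
\]
which has dimension $n + 2(n-1) = 3n - 2$ because $\loc^1_x$ is a divisor in $X$ for general $x$. This contradicts the inequality $\dim e(\tilde I_0) \geq 3n - 1$. The only delicate point is ensuring $e(\tilde I_0)$ meets the portion of $X^3$ where this estimate of $\dim \Omega$ is valid, but this is automatic: by $1$-freeness of $f$ the map $[f] \mapsto f(0)$ is smooth, so the open locus of $\tilde I_0$ on which $f(0)$ is a general point of $X$ is dense and carries the full dimension of $\tilde I_0$.
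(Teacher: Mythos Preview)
Your approach is a valid alternative to the paper's and the core dimension count is correct. The paper instead fixes two general points $x_1,x_2\in X$, observes that if $f(\PP^1)\subset\loc^1_{x_0}$ for some $x_0\in f(\PP^1)$ then by the symmetry $y\in\loc^1_x\Leftrightarrow x\in\loc^1_y$ one has $x_0\in\loc^1_{x_1}\cap\loc^1_{x_2}$, and then uses freeness of $f$ over $\{0,\infty\}$ to move $f(\PP^1)$ off this codimension-two set. Your \'etale three-point evaluation is a pleasant global repackaging of essentially the same count.

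There is, however, a genuine gap in your last paragraph. Smoothness of the ambient map $\HomWz\to X$, $[f]\mapsto f(0)$, does not imply that its restriction to the closed subset $\tilde I_0$ is dominant, so you have not justified that one may assume $f(0)$ general. Concretely, set $B=\{x\in X:\loc^1_x=X\}$. If $B$ happened to be a divisor, then since $\rho(X)=1$ every curve $f(\PP^1)$ meets $B$, so $\tilde I$ contains $\{([f],p):f(p)\in B\}$, which already dominates $\HomWz$; its fibre over $0$ is $\{[f]:f(0)\in B\}$, of dimension exactly $3n-1$, and $e$ maps it into $B\times X\times X\subset\Omega$, also of dimension $3n-1$. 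No contradiction arises, and your argument stalls.

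The repair is to show $\codim_X B\geq 2$. One way: the incidence $\bar I=\overline{\{(x,y):y\in\loc^1_x\}}\subset X\times X$ has pure dimension $2n-1$, and each of its irreducible components arises from a covering family of minimal curves, hence dominates both factors; thus $B\times X$ cannot be a component, forcing $\dim B\leq n-2$. A slicker route, closer in spirit to the paper, is to use the symmetry to rewrite $\Omega=\{(x,y_1,y_2):x\in\loc^1_{y_1}\cap\loc^1_{y_2}\}$ and then show that $\tilde I_0\to X^2$, $[f]\mapsto(f(1),f(\infty))$, is dominant: since $\tilde I\to\HomWz\to X^2$ is dominant, a general $(y_1,y_2)$ lies under some $([f],p)\in\tilde I$; as $y_1,y_2$ are general one has $p\neq 1,\infty$, and the subgroup of $\mathrm{PGL}_2$ fixing $1,\infty$ moves $p$ to $0$ while keeping $(f(1),f(\infty))=(y_1,y_2)$. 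Restricting to this dominant component you may then take $(y_1,y_2)$ general, where $\loc^1_{y_1}\cap\loc^1_{y_2}$ has codimension two, and your count goes through.
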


\begin{proof} Fix two general points $x_1, x_2 \in X$. A general morphism $[f] \in \HomWz$ passing through $x_1$ and $x_2$ is $2$-free and up to reparametrisation we have $f(0)=x_1, f(\infty)=x_2$. Set $g:=f|_{\{ 0, \infty \}}$, then $f$ is free over $g$ 
\cite[II, Defn.3.1]{Kol96}. Suppose now that such a curve has the property $f(\PP^1) \subset \loc^1_{x_0}$
for some $x_0 \in f(\PP^1)$. Thus $x_1, x_2 \in \loc^1_{x_0}$, hence by symmetry $x_0 \in (\loc^1_{x_1} \cap \loc^1_{x_2})$. 
Yet the intersection 
$$
\loc\nolimits ^1_{x_1} \cap \loc\nolimits ^1_{x_2}
$$
has codimension two in $X$. By \cite[II, Prop.3.7]{Kol96} a general deformation of $f$ over $g$ is disjoint from this set. 
\end{proof}

\paragraph{Proof of Proposition \ref{p:zero}.}
Arguing by contradiction, we suppose that $\vmrt \cdot \tilde C>0$
($\tilde C$ is not contained in $\vmrt$ for the general $[C] \in
\binimal ^\circ$).
Applying Lemma~\ref{l:badset} with 
$$
\vmrt_0 := \{ 
v^\perp \in \vmrt \ | \ \C v = T_{l, \pi(v)} \ 
\text{where $[l] \in \mathcal
  K$ is standard}
\},
$$
we see that for a general point $[C] \in \binimal$ there exists a 
point $x_1 \in C$  and a standard curve $[l] \in \minimal_{x_1}$
such that 
\begin{equation} \label{sametangent}
T_{C, x_1} = T_{l, x_1}.
\end{equation}

We shall now reformulate the property \eqref{sametangent}
in terms of the universal family $\Univ_{x_1,  x_2}$,
with $x_2$ a point chosen in $C \setminus \mbox{loc}^1_{x_1}$
thanks to Lemma~\ref{l:notcontained}.
Consider the blow-up $\epsilon: \tilde X \to X$ at the point $x_1$, with
exceptional divisor $E_1$. Since the general member of 
$\binimal _{x_1,x_2}$ is smooth at $x_1$, there is a rational map 
$\tilde \ev: \Univ _{x_1,x_2} \dashrightarrow \tilde X$
such that $\epsilon \circ \tilde \ev = \ev$ (on the locus where $\tilde \ev$ is defined). It restricts
to a well-defined rational map 
$\sigma_1 \dashrightarrow E_1$ 
which is dominant, and therefore generically finite,
because the general member of $\binimal _{x_1,x_2}$
is $2$-free.
In particular we may assume it is finite in a neighbourhood of the 
point $C \cap \sigma_1$.

We then consider the proper transform $\tilde l$ of $l$ under
$\epsilon$, and let $\Gamma$ be an irreducible component of 
$\tilde \ev ^{-1} (\tilde l)$ passing through $C \cap \sigma_1$.
It is a curve that is mapped to a curve in $\binimal _{x_1, x_2}$ by
$q$.
Also, applying the same construction to the divisor 
$D _{x_1} \subset X$, one gets a prime divisor 
$G \subset \Univ _{x_1,x_2}$ mapping surjectively onto 
$D _{x_1}$ and $\binimal _{x_1,x_2}$ respectively.

Since both maps $q|_G$ and $\ev |_G$ are étale at the general point
of $G$, for the general $l' \in \minimal _{x_1}$
there exists a curve $\Gamma' \subset G$ such that $q(\Gamma')$ is
a curve, $\ev(\Gamma')=l'$,
both maps $q|_G$ and $\ev |_G$ are étale at the general point 
$x \in l'$, and there is $[C'] \in \binimal ^\circ$ such that
the point $C' \cap \sigma_1$ lies on $\Gamma'$.
Yet this is a contradiction to Proposition~\ref{p:separation} below.
\qed
\endparagraph

\begin{propositionref}{\cite[Lemma~3.9]{Miy04}}
\label{p:separation} 
In the situation of Proposition~\ref{p:zero},
let $x_1, x_2 \in X$ be general points, and $[l]$ a general member
of $\minimal _{x_1}$.
Consider an irreducible curve
$\Gamma \subset \Univ_{x_1, x_2}$ such that $\ev(\Gamma)=l$ and 
$q(\Gamma)$ is a curve,
and assume there exists a prime divisor $G \subset \Univ _{x_1,x_2}$
mapped onto $D_{x_1}$ by $\ev$ and containing $\Gamma$, such that both 
maps $q|_G$ and $\ev|_G$ are étale at a general point of $\Gamma$.
Then $\Gamma \cap \sigma_1$ does not contain any point 
$C \cap \sigma_1$ with $[C] \in \binimal ^\circ$.
\end{propositionref}

We give the proof for the sake of completeness.

\begin{proof}
Since $[l]$ is general in $\minimal _{x_1}$, we have 
$$
T_X|_{l} \simeq \sO_{\PP^1}(2) \oplus \sO_{\PP^1}(1)^{n-2} \oplus \sO_{\PP^1},
$$
and $\minimal_{x_1}$ is smooth 
with tangent space $H^0(l, N_{l/X}^+ \otimes \O_{l}(-x_1))$ at $[l]$,  
where $\mathcal {E}^+$ denotes the ample part of a vector bundle  
$\mathcal{E} \rightarrow \PP^1$, i.e.\ 
its ample subbundle of maximal rank.

Let $x \in \Gamma$ be a general point, and set $y= \ev (x) \in l$.
For some analytic neighbourhood $V \subset \minimal_{x_1}$
of $[l]$,
we have an evaluation map
\[
\PP^1 \times V 
\longrightarrow D_{x_1}
\]
which is \'etale at $(y,[l])$,
and the tangent space to $D _{x_1}$ at $y$ is thus
\begin{equation*}
T_{D_{x_1},y} =
T _{l,y} \oplus \bigl( N_{l/X}^+ \otimes \O_{l}(-x_1)
\bigr)_y
= T_X|_{l, y} ^+.
\end{equation*}
Since $q|_G$ and $\ev|_G$ are \'etale in $x$ we obtain that
\begin{equation} \label{idsubspaces}
(q^* T_{\binimal_{x_1, x_2}})_{x} = \ev^* T_X|_{l, \ev(x)}^+
\end{equation}
as subspaces of $T_{\Univ_{x_1, x_2},x}$.

We argue by contradiction and suppose that there
exists $[C] \in \binimal ^\circ$ such that $(C \cap \sigma_1) \in
(\Gamma \cap \sigma_1)$. 
Since $\Gamma$ maps onto $l$ it is not contained in the divisor
$\sigma_1$. 
Since the smooth rational curve $C$ is $2$-free, there exists by
semicontinuity a neighbourhood $U$ of $[C] \in \binimal_{x_1, x_2}$ 
parametrising $2$-free smooth rational curves. For a $2$-free rational
curve, the evaluation morphism $\ev$ is smooth 
in the complement of the distinguished divisors $\sigma_i$ \cite[II,
Prop.3.5.1]{Kol96}. Thus if we denote by $R \subset  \Univ_{x_1, x_2}$
the ramification divisor 
of $\ev$, $\sigma_1$ is the unique irreducible component of $R$ 
containing the point $C \cap \sigma_1$. Thus
$\Gamma$ is not contained in the ramification divisor 
of $\ev$.

Since $q(\Gamma)$ is a curve, there exists by Lemma \ref{l:basechange} 
a finite base change $T \rightarrow q(\Gamma)$ 
with $T$ a smooth curve, 
such that the normalisation $S$ of the fibre product 
$T \times_{\binimal_{x_1, x_2}}  \Univ_{x_1, x_2}$ is a surface
with a conic bundle structure $\holom{\varphi}{S}{T}$
satisfying the conditions of Lemma \ref{l:computation}.
After a further base change we may suppose that there exists a
$\varphi$-section  $\Gamma_1$ 
that maps onto $\Gamma$.
Note that since we obtained $S$ by a base change from $\Univ_{x_1, x_2}$,
the ramification divisor of the map $\mu: S \rightarrow
\Univ_{x_1, x_2}$ is contained in the $\varphi$-fibres, 
i.e.\ its image by $\varphi$ has dimension $0$.
In particular $\Gamma_1$ is not contained in this ramification locus. 

Since the rational curve $C$ is smooth and $2$-free, the universal
family $\Univ_{x_1, x_2}$ is smooth in a  
neighbourhood of $C \cap \sigma_1$. Thus $\sigma_1$ is a Cartier
divisor in a neighbourhood of $C \cap \sigma_1$,
and we can use the projection formula to see that  
$$
\Gamma_1 \cdot \mu^* \sigma_1 =
\mu_*(\Gamma_1) \cdot \sigma_1 > 0. 
$$ 
In particular $\Gamma_1$ is not disjoint from the distinguished sections in the conic bundle $S \rightarrow T$. 
Let now \holom{\epsilon}{\hat S}{S}
be the minimal resolution of singularities,
and $\hat \Gamma_1$ the proper transform of $\Gamma_1$. 
Since the distinguished sections are in the smooth locus of $S$,
the section $\hat \Gamma_1$ is not disjoint from the distinguished
sections of $\hat S \rightarrow T$. We shall now show that
$$
(\hat \Gamma_1)^2 \leq 0,
$$
which is a contradiction to Lemma~\ref{l:computation}.

Denote by $f: \hat \Gamma_1 \rightarrow l$ the restriction of 
$\ev \circ \mu \circ \epsilon: \hat S \to X$.
Since $\hat \Gamma_1$ is not in the ramification locus of 
$\mu \circ \epsilon$ and 
$\Gamma$ is not in the ramification divisor of $\ev$, the tangent map
$$
T_{\hat S}|_{\hat \Gamma_1} \rightarrow f^* T_X|_{l}
$$
is generically injective. Since $\hat \Gamma_1$ is a $\varphi \circ \epsilon$-section, we have an isomorphism 
\begin{equation}
\label{isomTgN}
T_{\hat S/T}|_{\hat \Gamma_1} \simeq N_{\hat \Gamma_1/\hat S}.
\end{equation}
Since $l$ has the standard splitting type \eqref{splitstandard} 
we have a (unique) trivial quotient $f^* T_X|_{l} \twoheadrightarrow
\sO_{\hat \Gamma_1}$, and thanks to \eqref{isomTgN}
we are done if we prove that the natural map
$$
T_{\hat S/T}|_{\hat \Gamma_1} \hookrightarrow T_{\hat S}|_{\hat \Gamma_1} \rightarrow f^* T_X|_{l} \twoheadrightarrow \sO_{\hat \Gamma_1} 
$$
is not zero. It is sufficient to check this property for a general
point in $\hat \Gamma_1$, and since $\hat \Gamma_1 \rightarrow \Gamma$
is generically \'etale, it is sufficient to check that for a general $x
\in \Gamma$, the natural map 
$$
T_{\Univ_{x_1, x_2}/\binimal_{x_1, x_2}, x} \rightarrow \ev^* (T_{X,\ev(x)}) 
$$
does not have its image into the ample part  $\ev^* (T_X|_{l, \ev(x)}^+)$. Yet by \eqref{idsubspaces}
we know that $(q^* T_{\binimal_{x_1, x_2}})_{x}$ maps into the ample part. Thus if $T_{\Univ_{x_1, x_2}/\binimal_{x_1, x_2}, x}$
also maps into the ample part, then the tangent map
$$
T_{\Univ_{x_1, x_2},x} \rightarrow \ev^* (T_{X, \ev(x)})
$$
cannot be surjective. Since $\Gamma$ is not contained in the
ramification locus of $\ev$ this is a contradiction.
\end{proof}

\section{Proof of the main theorem}

\begin{paragraph}{Proof of Theorem~\ref{t:miyaoka}.}
If $X \simeq \PP^n$ we are done, so suppose that this is not the
case. Then consider the family of minimal rational curves $\minimal$
constructed in Section~\ref{S:work} and the associated total VMRT
$\vmrt$. 
Denote by $d \in \N$ the degree of a general VMRT $\vmrtx \subset
\PP(\Omega_{X, x})$.  

\medskip\noindent
{\em Step 1. Using the family $\binimal^\circ$.} 
In this step we prove that  
\begin{equation} \label{equationcool}
\vmrt \sim_\Q d (\zeta - \frac{1}{n} \pi^* K_X),
\end{equation}
where $\zeta$ is the tautological divisor class on $\PP(\Omega_X)$. 
Note that $\PP(\Omega_X)$ has Picard number two, so we can always write
$$
\vmrt \sim_\Q a \zeta + b \frac{-1}{n} \pi^* K_X
$$
with $a, b \in \Q$. Let now $\binimal^\circ$ be the family of rational
curves constructed in Section~\ref{S:work}, 
and let $\tilde C$ be the lifting of a curve $C \in \binimal^\circ$. By
Proposition~\ref{p:zero} we have 
$\vmrt \cdot \tilde C=0$. Since by the definition of $\tilde C$ one
has $\zeta \cdot \tilde C = -2$ 
and $-\frac{1}{n} \pi^* K_X \cdot \tilde C=2$, it follows that
$a=b$. Since $\vmrtx = \vmrt|_{\PP(\Omega_{X, x})} \sim_\Q d
\zeta|_{\PP(\Omega_{X, x})}$, we have $a=b=d$. This proves
\eqref{equationcool}. 

\medskip\noindent
{\em Step 2. Bounding the degree $d$.}
Denote by $\minimalz \subset \minimal$ the 
open set parametrising smooth standard rational curves
in $\minimal$.
We define an injective map 
$$
j: \minimalz \hookrightarrow \Rat(\PP(\Omega_X))
$$
by mapping a curve $l$ to the image $\tilde l$ of the morphism
$s: l \rightarrow \PP(\Omega_X)$
defined by the invertible quotient $\Omega_X|_l \rightarrow
\Omega_l$. We denote by $\minimalzt$ the image of $j$.
Let us start by showing that $\minimalzt$ 
is dense in an irreducible component of $\Rat(\PP(\Omega_X))$. Arguing
by contradiction we suppose that $\minimalzt$ 
is contained in an irreducible component $\mathcal{R}$ 
of dimension strictly larger than $2n-3$. The projection $\pi$ defines
a map $\pi_*$ between the spaces 
of rational curves,
and by construction $\pi_*(\mathcal{R})$ contains $\minimalz$. Since $\minimalz$ is dense in an irreducible component
of $\Rat(X)$ we obtain that (up to replacing $\mathcal{R}$ by a Zariski open set) we have a map 
$\pi_*: \mathcal{R} \rightarrow \minimalz$. Since $\dim \mathcal{R}>2n-3$ this map has positive-dimensional fibres, 
so a general curve $\tilde l$ deforms in $\PP(\Omega_X|_l)$. Yet this is impossible since
$$
N_{\tilde l/ \PP(\Omega_X|_l)} \simeq \sO_{\PP^1}(-2) \oplus \sO_{\PP^1}(-1)^{\oplus n-2}.
$$
By construction the lifted curves $\tilde l$ 
are contained in $\vmrt$. Thus the open set $\tilde {\minimal}_0
\subset \Rat(\PP(\Omega_X))$  
is actually an open set in $\Rat(\vmrt)$. Since $\vmrt \subset
\PP(\Omega_X)$ is a hypersurface, 
the algebraic set $\vmrt$ has lci singularities. Thus we can apply \cite[II, Thm.1.3, Thm.2.15]{Kol96} and obtain
$$
2n-3 = \dim \tilde {\minimal}_0 \geq - K_{\vmrt} \cdot \tilde l + (2n-2) - 3.
$$
We thus have $- K_{\vmrt} \cdot \tilde l \leq 2$.

Now by construction we have $-\frac{1}{n} \pi^* K_X \cdot \tilde l=1$ and
$\zeta \cdot \tilde l=-2$.
Since $K_{\PP(\Omega_X)} = 2\pi^* K_X - n \zeta$, the adjunction
formula and \eqref{equationcool} yield 
$$
2 \geq - K_{\vmrt} \cdot \tilde l = -(K_{\PP(\Omega_X)}+\vmrt) \cdot \tilde l = d.
$$

\medskip\noindent
{\em Step 3. Conclusion.}
If $d=1$ or $d=2$ but $\vmrtx$ is reducible, we obtain a contradiction
to \cite[Thm.1.5]{Hwa07} 
(cf.\ also \cite[Thm.3.1]{Ara06}).
If $d=2$ and $\vmrtx$ is irreducible, $\vmrtx$ is normal
\cite[II,Ex.6.5(a)]{Har77}, and therefore isomorphic to
its normalisation $\minimal_x$ which is smooth (see
\S\ref{s:minimal}).
It is thus a smooth quadric and we conclude by \cite[Main Thm.]{Mok08}.
\qed
\end{paragraph}

\begin{remark} \label{remarkgap}
Let us explain the difference of our proof with Miyaoka's approach:
in the notation of Section~\ref{S:work}, he considers the family
$\binimal_{x_1, x_2}$. 
As we have seen above the evaluation map $\holom{\ev}{\Univ_{x_1,
    x_2}}{X}$ is generically finite and his goal is to prove 
that $\ev$ is  birational. He therefore analyses the preimage
$\fibre{\ev}{l_1 \cup l_2}$, 
where the $l_i \subset X$ are general minimal curves passing through
$x_i$ respectively such that $[l_1 \cup l_2] \in \binimal_{x_1, x_2}$. 
If $\Gamma \subset \fibre{\ev}{l_1 \cup l_2}$ is an irreducible curve
mapping onto $l_1$ one can make a case distinction: 
if $q(\Gamma)$ is a curve that is not contained in the discriminant
locus $\Delta \subset \binimal_{x_1, x_2}$ 
(Case $\bf C$  in \cite[p.227]{Miy04}) Miyaoka makes a very interesting 
observation which we stated as Proposition~\ref{p:separation}. However
the analysis of the `trivial' case 
(Case $\bf A$ in \cite[p.227]{Miy04}) where $q(\Gamma)$ is a point is
not correct: it is not clear that 
$q(\Gamma) = [l_1 \cup l_2]$, because
there might be another curve in $\binimal_{x_1, x_2}$ which is of the
form $l_1 \cup l_2'$ with $l_2 \neq l_2'$. This possibility is an
obvious obstruction to the birationality of $\ev$ and invalidates
\cite[Cor.3.11(2), Cor.3.13(1)]{Miy04}. The following example shows
that this possibility does indeed occur in certain cases.
\end{remark}

\begin{example}
Let  $H \subset \PP^n$ be a hyperplane and $A \subset H \subset \PP^n$
a projective manifold $A$ of dimension $n-2$  
and degree $3 \leq a \leq n$. Let $\holom{\mu}{X}{\PP^n}$ be the
blow-up of $\PP^n$ along $A$.  
Then $X$ is a Fano manifold \cite[Rem.4.2]{Miy04} and $-K_X \cdot C
\geq n$ for every rational curve $C \subset X$ 
passing through a {\em general} point
(the $\mu$-fibres are however rational curves with $-K_X \cdot C=1$).
The general member of a family of minimal rational curves $\minimal$ 
is the proper transform of a line that intersects $A$. 
Consider the family $\binimal$ whose general member is the strict
transform  of a reduced, connected degree two curve $C$ such that $A
\cap C$ is a finite scheme of length two. 
For general points $x_1, x_2 \in X$ the (normalised) universal family
$\Univ_{x_1, x_2} \rightarrow \binimal_{x_1, x_2}$ is a conic bundle
and the evaluation map $\ev: \Univ_{x_1, x_2} \rightarrow X$ is
generically finite. We claim that $\ev$ is not birational. 

\noindent
\emph{Proof of the claim.}
For simplicity of notation we denote by $x_1, x_2$ also the
corresponding points in $\PP^n$. 
Let $l_1 \subset \PP^n$ be a general line through $x_1$ that
intersects $A$. Since $x_2 \in \PP^n$ is general there exists 
a unique plane $\Pi$ containing $l_1$ and $x_2$. Moreover the
intersection $\Pi \cap A$ consists of exactly $a$ points, one of 
them the point $A \cap l_1$. For every point $x \in \Pi \cap A$
other than $A \cap l_1$, there exists a unique line $l_{2,x}$ 
through $x$ and $x_2$. By Bezout's theorem $l_1 \cup l_2$ is
connected, so its proper transform belongs to $\binimal_{x_1, x_2}$. 
Yet this shows that $\fibre{\ev}{l_1}$ contains $a-1>1$ copies of
$l_1$, one for each point $x \in \Pi \cap A \setminus l_1 \cap A$.
This proves the claim.
\qed

Let us conclude this example by mentioning that the conic bundle
$\Univ_{x_1, x_2} \rightarrow \binimal_{x_1, x_2}$ does not satisfy
the symmetry conditions of Lemma~\ref{l:computation}.
\end{example}


\def\cprime{$'$}

\end{document}